\documentclass[a4paper,10pt]{article}
\usepackage[margin=30mm]{geometry}

\usepackage[utf8]{inputenc}
\usepackage[english]{babel}
\usepackage{alphabeta}
\usepackage{lmodern}
\usepackage{amsmath}
\usepackage{amssymb}
\usepackage{amsthm}
\usepackage{graphicx}
\usepackage{fancyhdr}
\usepackage[section]{placeins}
\pagestyle{fancy}
\fancyhf{}
\addto\captionsenglish{}
\usepackage[titletoc]{appendix}
\usepackage{subfig}
\usepackage{wrapfig}
\usepackage{url}
\usepackage{hyperref}
\usepackage{mathrsfs}
\usepackage{mathtools}
\usepackage{cleveref}
\usepackage{algorithm,algorithmic}
\usepackage{dsfont}
\usepackage{wasysym}
\usepackage{mdframed}
\usepackage{lipsum}
\usepackage{enumitem}
\usepackage{schemata}
\usepackage{tikz}
\usepackage{ marvosym }
\usetikzlibrary{tikzmark,calc,decorations.pathreplacing}
\DeclareMathAlphabet\mathbfcal{OMS}{cmsy}{b}{n}
\usepackage{stmaryrd,newlfont}
\usepackage{epigraph}
\setlength{\epigraphrule}{0pt}
\usepackage{chngpage}
\usepackage[normalem]{ulem}

\usepackage{color}



\usepackage[symbol]{footmisc}



\newcommand{\R}{\mathds{R}}

\newcommand{\Hb}{\mathcal{H}}

\DeclarePairedDelimiter{\abs}{\lvert}{\rvert}
\DeclarePairedDelimiter{\norme}{\lVert}{\rVert}

\newcommand{\prox}{\text{prox}}
\DeclareMathOperator{\argmin}{arg\,min}

\newcommand{\grandO}[1]{O\mathopen{}\left(#1\right)}
\providecommand{\keywords}[1]{\textbf{\textit{Keywords :}} #1}

\newcommand{\psc}[2]{\langle #1,#2\rangle}

\DeclareMathOperator{\dist}{dist}

\newcommand{\off}[1]{}
\newcommand\red[1]{{\color{black}#1}}

\ifpdf
  \DeclareGraphicsExtensions{.eps,.pdf,.png,.jpg}
\else
  \DeclareGraphicsExtensions{.eps}
\fi



\newtheorem{teo}{Theorem}[section]
\newtheorem{lemma}{Lemma}[section]

\newtheorem{proposition}{Proposition}[section]

\theoremstyle{definition}
\newtheorem{definizione}{Definition}[section]

\newtheorem{esempio}[teo]{Example}
\theoremstyle{remark}
\newtheorem{remark}{Remark}

\newtheorem*{theo*}{Théorème}

\numberwithin{equation}{section}

\pagestyle{plain} 

\title{Convergence rates for the Heavy-Ball continuous dynamics for non-convex optimization, under Polyak-\L ojasiewicz condition.}
\author{Vassilis Apidopoulos\thanks{ MaLGa, DIBRIS, Universit\`a degli Studi di Genova. E-mail: vassilis.apid@gmail.com} \and Nicol\`o Ginatta\thanks{ MaLGa, DIMA, Universit\`a degli Studi di Genova. E-mail: ginatta@dima.unige.it} \and Silvia Villa\thanks{ MaLGa, DIMA, Universit\`a degli Studi di Genova. E-mail: villa@dima.unige.it}
}
\date{}
 
\begin{document}
\maketitle

\begin{abstract} We study  convergence  of the trajectories of the  Heavy Ball dynamical system, with constant damping coefficient, in the framework of convex and non-convex smooth optimization. By using the Polyak-\L ojasiewicz condition, we derive  new linear convergence rates for the associated trajectory, in terms of objective function values, without assuming uniqueness of the minimizer. 
\end{abstract}
\keywords{Smooth optimization, non-convex optimization, inertial dynamics, Heavy-Ball method, Polyak-\L ojasiewicz condition, rates of convergence}

\section{Introduction}

Let \(\Hb\) be a Hilbert space and \(F\colon\Hb\to\bar{\R}=\R\) be a  differentiable function with $L$-Lipschitz continuous gradient, such that the set of minimizers is nonempty.
We are interested in the  convergence properties for $t$ going to infinity of the solution-trajectory of the following second-order dynamical system
\begin{equation} \label{DSGD}
	\begin{cases}
		\ddot{x}(t) + \alpha\dot{x}(t) + \nabla F(x(t)) = 0 \\
		x(0) = x_0,\ \dot{x}(0)= 0
	\end{cases}
\end{equation}
where $\alpha>0$. System \eqref{DSGD} is  called {\em heavy ball} system since it  physically describes the motion of a material point (ball) rolling over the graph of the function $F$ and subject to a friction proportional  to the velocity (see \cite{attouch2000heavy,alvarez2000minimizing,attouch2002dynamics}). The constant of proportionality  $\alpha>0$ is called  
\emph{damping parameter} \cite{polyak1964some,alvarez2002second,attouch2000heavy}. In general, systems like \eqref{DSGD} play an important role  in 
various fields such as mechanics and physics, and in optimization (see for example \cite{cabot2009long,cabot2009second,su2016differential,attouch2018fast,attouch2017asymptotic,begout2015damped} and references therein). 

Indeed the dynamical system \eqref{DSGD} has been widely studied in the optimization literature, due to the fact that under suitable assumptions on the function $F$, the solution-trajectory satisfies the minimization property:
\[F(x(t))-\min F \xrightarrow{t\to+\infty}
0.\]
This approach has been especially fruitful to study convergence properties of discrete accelerated first order algorithms \cite{attouch2018fast,apidopoulos2017convergence,apidopoulos2021convergence}. 
As it is shown in \cite{polyak1964some}, for strongly convex functions, system \eqref{DSGD} leads to some linear rates of convergence for $F(x(t))-\min F$, which can be faster, depending on the strong convexity parameter, than those associated to the trajectory of the first order \emph{gradient flow} system \(\dot{x}(t) +\nabla F(x(t))=0\) (see \cite{polyak1963gradient}).
The purpose of this paper is to establish  new convergence properties for the solutions of system \eqref{DSGD}. To the best of our knowledge, there are no results concerning the explicit convergence rates of the trajectories generated by the Heavy ball system \eqref{DSGD}, for objective functions satisfying the Polyak-\L ojasiewicz condition without convexity or uniqueness of the minimizer. In this paper we are addressing this issue and establish worst-case linear convergence rates for the objective function $F(x(t))-\min F$. \red{In addition we extend some of these rates in the case of a convex, but not necessarily differentiable. function $F$. } Polyak-Lojasiewicz condition is a relaxation of strong convexity, and has proven to be especially useful in obtaining linear convergence rates for several dynamical systems usually employed in optimization (see for example~\cite{bolte2007lojasiewicz,bolte2017error,Karimi2016,polyak1963gradient}), as also for training deep neural networks (see for example \cite{pmlr-v97-allen-zhu19a} and \cite{NEURIPS2019_fcdf25d6}).

Systems as \eqref{DSGD} are closely linked with optimization algorithms, such as the heavy-ball algorithm, which are frequently used in various settings in optimization, machine learning and inverse problems. In many cases, the convergence properties of the continuous systems are inherited by their associated numerical schemes, i.e. algorithms. The last observation makes the study of continuous-in-time systems as \eqref{DSGD} very popular, since they can provide powerful insights and tools for the analysis of inertial first order methods \cite{attouch2018fast,attouch2019rate,su2016differential,apidopoulos2017convergence,apidopoulos2021convergence}.

\subsection{Previous work}

The seminal paper by Polyak ~\cite{polyak1964some} shows that,  if $F$ is $\mu$ stronly convex and twice differentiable, the generated trajectory 
converges linearly to the unique minimizer of $F$ as $e^{-2\sqrt{\mu}t}$.
For $\mu<1$, this convergence rate is faster than the one obtained of the gradient flow, that is $e^{-2\mu t}$, see e.g. \cite{polyak1963gradient,polyak1964some,polyak2017lyapunov}. Further studies of system \eqref{DSGD} were also made in \cite{attouch2000heavy}, \cite{alvarez2000minimizing}, \cite{attouch2002dynamics} and \cite{haraux1998convergence}, where convergence properties of the trajectory of solutions of \eqref{DSGD} were established in more general settings: $F$ convex \cite{alvarez2000minimizing}, non-convex \cite{attouch2000heavy} or non-differentiable \cite{attouch2002dynamics}. 

Another interesting fact which was motivated and pointed out in \cite{attouch2000heavy} is that, in contrast with the Gradient flow, the Heavy-Ball \eqref{DSGD}, is a second order (in time) system which is not a descent scheme. This allows the trajectory generated by \eqref{DSGD} to escape possible local minima of non-convex functions, by tuning properly the initial velocity $\dot{x}(0)$ (see for example the corresponding discussion in \cite{attouch2000heavy}). In general, systems like \eqref{DSGD} with other choices of damping parameter became very popular and constitute an active area of research thanks to their nice convergence properties (among the rich literature, one can consult \cite{cabot2009second,su2016differential,attouch2018fast,attouch2019rate,attouch2020first,aujol2018optimal,apidopoulos2018differential} and their possible references). 

Recently, the research directions for system \eqref{DSGD} have focused on studying the case where the objective function \(F\) satisfies weaker geometrical assumptions in order to tackle the minimization problem of a wider family of functions. In this context in \cite{aujol2020HBconvergence,siegel2019accelerated}, it was discovered that one can relax the strong convexity property and the Lipschitz character of the gradient of $F$ and still obtain some linear convergence rates for the associated trajectory of \eqref{DSGD}. For quasi-strongly convex functions (see relation \eqref{quasistrconvex} in Section \ref{sectionpreliminaries}), admitting a unique minimizer, it was shown that the quantity $F(x(t))-{\min}F$ converges as $e^{-\sqrt{2\mu}t}$, where $\mu$ is the quasi-strong convexity parameter and $\alpha=3\sqrt{{\mu}/{2}}$, see Theorem $1$ in \cite{aujol2020HBconvergence}. This result was recently extended in \cite{aujolHBLoja2020}, where the authors derive some linear convergence rates for convex functions admitting a unique minimizer and satisfying the Quadratic growth condition (see \eqref{GC} in Section~ \ref{sectionpreliminaries}), which is equivalent to the Polyak-\L ojasiewicz condition. 

As for the non-convex setting (i.e. the minimizing function $F$ is not necessarily convex), less is known about the convergence properties of system \eqref{DSGD}. In particular, in \cite{begout2015damped} (see also \cite{attouch2020first,attouch2020closed} and \cite{polyak2017lyapunov}), the authors show that for continuously differentiable functions satisfying the Kurdyka-\L ojasiewicz condition (which is a generalization of the Polyak-\L ojasiewicz condition), the solution of system \eqref{DSGD} converges linearly to a minimizer of the corresponding function. However their analysis does not provide explicit formulas for the exponents of these linear rates, which is the main subject of the current paper.  

\subsection{Organization}
The paper is organized as follows: In Section \ref{sectionpreliminaries} we recall some basic definitions and tools concerning the main geometrical assumption on the minimizing function $F$. In Section \ref{sectionresults} we present the main results of the paper concerning the convergence rates of $F(x(t))-\min F$.
In Theorem \ref{coroptimalnon}, we treat the non convex case, while some slight improved rates are given in Theorem \ref{coroptimal} where the minimizing function is additionally supposed to be convex. \red{In Section \ref{sectionnondiff}, we extend some of the results in the convex non-smooth setting. Some numerical experiments are reported in Section \ref{sectionnumerics}.} Section \ref{sectionConv} contains the related convergence analysis and the proofs of the two main theorems. Finally, Appendix \ref{appendixa} contains some basic auxiliary results.

\section{Preliminaries}\label{sectionpreliminaries}

In this section we present some basic definitions and results which will be used in the rest of the paper. 

Let $\Hb$ be a Hilbert space endowed with the scalar product $\psc{\cdot}{\cdot}$ and the associated norm $\norme{\cdot}$. We consider the following minimization problem: 
\begin{equation}\label{minproblem}
    F_{\ast}=\min\{ F(x) ~ : ~ x\in \Hb \}
\end{equation}
where $\Hb$ is a Hilbert space and $F:\Hb\longrightarrow\R$ is a function that satisfies the following assumptions:
\begin{enumerate}
    \item[$\mathcal{A}.1$]\label{A1} $F$ is differentiable with $L$-Lipschitz gradient
    \item[$\mathcal{A}.2$]\label{A2} The set of minimizers $X_{\ast}=\argmin F$ is not empty.
\end{enumerate}

More precisely we are interested in the convergence properties of a solution-trajectory $\{x(t)\}_{t\geq0}$ of the dynamical system \eqref{DSGD}, to a minimizing solution of \eqref{minproblem} 

Under conditions $\mathcal{A}.1$ and $\mathcal{A}.2$, the existence and uniqueness of a strong global solution $x(t)\in \mathcal{C}^{2}(\Hb)$ of the initial value problem \eqref{DSGD} can be guaranteed by the Cauchy-Lipschitz theorem, see \cite[Theorem $11$]{attouch2000heavy}.

Throughout the paper, We will assume that the function $F$ satisfies the \textit{Polyak-Łojasiewicz condition}.
\begin{definizione}\label{2KLdefinition}
Let $F\colon\Hb\to\R$ be a differentiable function with $\argmin F \neq \emptyset$. 
We say that the function $F$ satisfies the Polyak-Łojasiewicz condition, if there exists some constant $\mu>0$, such that the following inequality holds: 
\begin{equation}\label{KL}
   (\forall x\in \Hb) \qquad F(x)-F_* \leq \frac{1}{2\mu}\norme{\nabla F(x)}^{2} \tag{$\mathcal{PL}$}
\end{equation}
\end{definizione}

Condition \eqref{KL} was introduced in the early works in \cite{lojasiewicz1963propriete} (see also \cite{polyak1963gradient}) and can be identified as a particular case of the \L ojasiewicz (or Kurdyka-\L ojasiewicz)  gradient inequality (see for example \cite{bolte2007lojasiewicz,bolte2010characterizations,bolte2017error} and \cite{lojasiewicz1963propriete,Kurdyka1998}). The main difference between our definition and the classical one (see  \cite{Kurdyka1998,bolte2007lojasiewicz,bolte2010characterizations}), is the fact that inequality \eqref{KL} is usually required to hold locally, namely for all points in a neighborhood of a given critical point and in a suitable sublevel set, see \cite{bolte2007lojasiewicz,bolte2010characterizations,bolte2017error} for a throughout analysis and extensions. In this global form, this property has been introduced in \cite{polyak1963gradient}  and is also known  under the name \textit{Polyak-\L ojasiewicz} inequality, see e.g. \cite{polyak1963gradient}, or \cite{Karimi2016}. The global requirement, on the one hand restricts the class of considered functions, but on the other hand allows to obtain global convergence results with explicit constants. Indeed, either for classical dynamical systems or algorithms, condition \eqref{KL}  is intimately connected with the linear convergence rates of the objective function values both in the convex and non-convex setting, see e.g. \cite{polyak1963gradient}, \cite{polyak1964some}, \cite{Karimi2016}, \cite{bolte2017error}, \cite{drusvyatskiy2018error}, \cite{garrigos2017convergence}, \cite{polyak2017lyapunov}, \cite{zhang2020new} and their associated references. In what follows, we will make some remarks about functions satisfying property~\eqref{KL}.
 
The Polyak-\L ojasiewicz condition \eqref{KL} is a relaxation of the strong convexity property of a function, i.e.
\begin{equation}
\label{eq:strconv}
 \exists \mu>0 ~ : ~ \forall (x,y)\in \Hb^2 ~ : \quad   F(y)\geq F(x)+\psc{\nabla F(x)}{y-x} +\frac{\mu}{2}\norme{x-y}^2
\end{equation}

Indeed, from the definition of strong convexity, by considering the minimum with respect to $y\in\Hb$ on both sides, one deduces the the\eqref{KL} condition.

\begin{remark}  In this remark we collect some basic observations about the Polyak-\L ojasiewicz condition in Definition~\ref{2KLdefinition}:
\begin{itemize}
\item Condition \eqref{KL} implies  that every critical point of $F$ is a global minimizer.  
\item Differently from the notion of strong convexity, condition \eqref{KL} does not imply uniqueness of the minimizer, even in the convex case. For instance, in $\R$, consider the function $F(x)=|\abs{x}-1|_{+}^{2}=\bigl(\max\{\abs{x}-1,0\}\bigr)^2$.
\item Condition \eqref{KL}  does not imply that the function is convex, see  Example~\ref{ex:klnonconv} and Figure~\ref{figKLnonconvex}.
\end{itemize}
\end{remark}

\begin{esempio}\label{ex:klnonconv}Let  $f\colon\R^d\to\R$ be a differentiable function and consider $F\colon\R^{d+1}\to\R$ by setting
\begin{equation}\label{equationSilvia}
(\forall (x,y)\in\R^{d+1}) \quad   F(x,y)=(y-f(x))^2.
\end{equation}
Then the set of minimizers of $F$ is the graph $f$, and this set is not convex unless $f$ is affine, thus $F$ is not necessarily convex unless $f$ is affine. On the other hand, condition \eqref{KL} is satisfied with $\mu=2$. Examples of functions in the class described by relation \eqref{equationSilvia} are relevant for deep learning applications, see e.g. \cite{LiuZhuBel20}.

\begin{figure}[ht]
\centering
\includegraphics[scale=0.10,trim={5cm 6.9cm 5cm 5.5cm}]{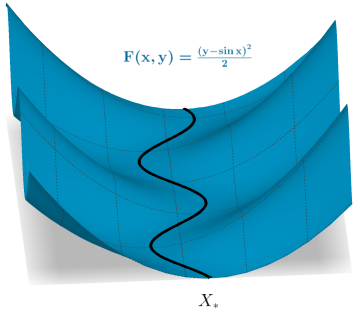}
\caption{Graph of the function $F(x,y)=\frac{(y-\sin x)^2}{2}$ with a non-convex set of minimizers $X_{\ast}=\{(x,y)\in \R^2 ~ : ~ y=\sin{x} \}$.}\label{figKLnonconvex}
\end{figure}
\end{esempio}

Condition \eqref{KL} is also closely connected with other geometrical notions, frequently used in the optimization literature to establish linear convergence rates for approximation methods, such as the \textit{error-bound condition} or \textit{metric subregularity} of the gradient at the origin, i.e.:
\begin{equation}\label{SR}\tag{$\mathcal{EB}$}
\exists \eta >0 \quad : \qquad \eta\text{dist}(x,X_{\ast}) \leq \norme{\nabla F(x)}  \qquad \forall x\in \Hb
\end{equation}
 or the \textit{quadratic growth condition}, i.e.: 
\begin{equation}\label{GC}\tag{$\mathcal{QG}$}
  \exists \theta >0 \quad : \qquad  \frac{\theta}{2}\text{dist}(x,X_{\ast})^2 \leq F(x)-F_{\ast} \qquad \forall x\in \Hb
\end{equation}
where dist$(x,C)=\inf\{\norme{x-y} ~ : ~ y\in C\}$ denotes the distance of a point $x\in \Hb $ from a set $C\subset \Hb$.

Another notion which has also been used to establish linear convergence of first order methods is the notion of \textit{quasi-strong convexity}, 
which was introduced in \cite{necoara2019linear}  as a relaxation of the strong convexity property of a function. It has also been 
used to study the asymptotic properties of the trajectories of system~\eqref{DSGD} in the recent work \cite{aujol2020HBconvergence}. 

A differentiable function $F\colon\Hb\to\R$ is called $\beta$-quasi-strongly convex, if there exists some constant $\mu>0$ such that for all $x\in \Hb$ and for all projections $\bar{x}$ of $x$ on $X_*$, it holds:
\begin{equation}\label{quasistrconvex}\tag{q$\mathcal{SC}$}
    F(\bar{x})\geq F(x)+\psc{\nabla F(x)}{\bar{x}-x} +\frac{\beta}{2}\norme{x-\bar{x}}^2.
\end{equation}
Notice however that a projection $\bar{x}$ of $x$ on $X_*$ may not exist when $\Hb$ is infinitely dimensional, e.g. if  $X_{\ast}$ is not a Chebyshev set. Unless there are no additional assumptions such as convexity or weakly closeness of $X_{\ast}$, or finite dimensionality of the space $\Hb$, relation \eqref{quasistrconvex} may have an empty meaning. In what follows, whenever employing relation \eqref{quasistrconvex}, we will refer to functions $F$  such that the projection of a point onto $X_{\ast}$ is nonempty.  Note however that, given $x\in \Hb$, there may be more than one projection of $x$ on $X_{\ast}$ .  Unlike strong convexity of a function, it is worth mentioning that the quasi-strong convexity does not imply neither convexity nor uniqueness of a minimizer. 

Below we present some known results about the interplay between conditions \eqref{KL}, \eqref{SR}, \eqref{GC} and \eqref{quasistrconvex} in different settings. In the convex setting, conditions \eqref{KL}, \eqref{SR} and \eqref{GC} are all equivalent, while  quasi-strong convexity is stronger than the Polyak-\L ojasiewicz condition \eqref{KL}. In particular, it can be shown that the class of quasi-strongly convex functions is a subclass of functions satisfying condition \eqref{KL}. 

\begin{proposition}\label{prop: geom_interplay}
Let $F\colon\Hb\to\R$ be a continuously differentiable function with $\argmin F \neq \emptyset$. Then the following implications hold true:
\begin{equation}\label{1stimplication}
 \eqref{quasistrconvex} \implies   \eqref{KL} \red{\implies \eqref{GC}} 
\end{equation}
with $\theta=\mu=\beta$.
\begin{enumerate}
\item If $F$ has $L$-Lipschitz gradient, then:
\begin{equation}\label{2ndimplication}
\eqref{SR} \implies  \eqref{KL}
\end{equation}
with $\mu=\frac{\eta^{2}}{L}$.
\item If $F$ is convex, then:  
\begin{equation}\label{3rdimplication}
 \eqref{GC} \implies  \eqref{SR} \implies  \eqref{KL}
\end{equation}
with $\eta=\frac{\theta}{2}$ and $\mu=\frac{\theta}{4}$.
\item If $F$ is convex with $L$-Lipschitz gradient, then:
 \begin{equation}\label{4thimplication}
     \eqref{KL} \implies  \eqref{quasistrconvex}
 \end{equation}
 with $\beta=\frac{\mu^2}{L}$
\end{enumerate}
\end{proposition}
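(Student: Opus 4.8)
The plan is to prove the four implications \eqref{1stimplication}--\eqref{4thimplication} one at a time, drawing on three elementary inequalities: the first-order characterization of convexity $F(y)\ge F(x)+\psc{\nabla F(x)}{y-x}$; the descent lemma $F(y)\le F(x)+\psc{\nabla F(x)}{y-x}+\frac{L}{2}\norme{y-x}^2$, available whenever $\nabla F$ is $L$-Lipschitz; and, for the convex smooth case, its sharpened companion $F(y)\ge F(x)+\psc{\nabla F(x)}{y-x}+\frac{1}{2L}\norme{\nabla F(y)-\nabla F(x)}^2$. Throughout I write $\bar x$ for a projection of $x$ onto $X_{\ast}$; when none exists I replace it by a minimizing sequence $\bar x_n\in X_{\ast}$ with $\norme{x-\bar x_n}\to\dist(x,X_{\ast})$ and pass to the limit. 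I will use repeatedly that $\nabla F(\bar x)=0$ and $F(\bar x)=F_{\ast}$ for every $\bar x\in X_{\ast}$.

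For \eqref{1stimplication} I start from \eqref{quasistrconvex}, replace $F(\bar x)$ by $F_{\ast}$, and rearrange to $F(x)-F_{\ast}\le\psc{\nabla F(x)}{x-\bar x}-\frac{\beta}{2}\norme{x-\bar x}^2$; the Young-type bound $\psc{a}{b}-\frac{\beta}{2}\norme{b}^2\le\frac{1}{2\beta}\norme{a}^2$ with $a=\nabla F(x)$, $b=x-\bar x$ then yields \eqref{KL} with $\mu=\beta$. For \eqref{2ndimplication} I apply the descent lemma to the pair $(\bar x,x)$; since $\nabla F(\bar x)=0$ this gives $F(x)-F_{\ast}\le\frac{L}{2}\dist(x,X_{\ast})^2$, and substituting the error bound \eqref{SR} produces \eqref{KL} with $\mu=\eta^2/L$ (no convexity is needed here). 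For \eqref{3rdimplication}, convexity evaluated at $(x,\bar x)$ gives $F(x)-F_{\ast}\le\psc{\nabla F(x)}{x-\bar x}\le\norme{\nabla F(x)}\dist(x,X_{\ast})$; combining this with \eqref{GC} and cancelling one factor of $\dist(x,X_{\ast})$ yields \eqref{SR} with $\eta=\theta/2$, and feeding the error bound back into the same estimate gives \eqref{KL} with $\mu=\theta/4$.

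The delicate step is \eqref{4thimplication}, where the target constant $\beta=\mu^2/L$ must be produced exactly. The rearranged goal is $F(x)-F_{\ast}\le\psc{\nabla F(x)}{x-\bar x}-\frac{\beta}{2}\norme{x-\bar x}^2$. The convex refined smoothness inequality applied at $(\bar x,x)$, using $\nabla F(\bar x)=0$, already gives $F(x)-F_{\ast}\le\psc{\nabla F(x)}{x-\bar x}-\frac{1}{2L}\norme{\nabla F(x)}^2$, so it remains to dominate $\frac{\beta}{2}\norme{x-\bar x}^2$ by $\frac{1}{2L}\norme{\nabla F(x)}^2$, i.e. to establish the sharp error bound $\norme{\nabla F(x)}\ge\mu\,\dist(x,X_{\ast})$. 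I obtain this from a quadratic growth estimate with the matching constant $\theta=\mu$: following the gradient flow $\dot y=-\nabla F(y)$, $y(0)=x$, the function $h=F(y)-F_{\ast}$ satisfies $h'=-\norme{\nabla F(y)}^2$ and, via \eqref{KL}, $\frac{d}{ds}\sqrt{h}\le-\frac{\sqrt{2\mu}}{2}\norme{\dot y}$; integrating bounds the trajectory length by $\sqrt{2/\mu}\,\sqrt{F(x)-F_{\ast}}$, and since the endpoint lies in $X_{\ast}$ this yields \eqref{GC} with $\theta=\mu$. Chaining \eqref{GC} ($\theta=\mu$) with \eqref{KL} then gives $\frac{\mu}{2}\dist(x,X_{\ast})^2\le F(x)-F_{\ast}\le\frac{1}{2\mu}\norme{\nabla F(x)}^2$, i.e. exactly $\norme{\nabla F(x)}\ge\mu\,\dist(x,X_{\ast})$, which closes the argument. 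The main obstacle is precisely this bootstrapping to the sharp error-bound constant $\eta=\mu$: the direct convex route of \eqref{3rdimplication} only delivers $\eta=\theta/2$, which is too weak by a factor of two, so the finer quadratic-growth argument (or an equivalent length estimate for the gradient flow) is what is needed to reach $\beta=\mu^2/L$.
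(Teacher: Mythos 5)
Your proof is correct, and for the core implications it follows the paper's skeleton, but it is genuinely more self-contained in one important place. Implications \eqref{1stimplication} (Young's inequality with $\varepsilon=\beta$) and the outer frame of \eqref{4thimplication} (the Baillon--Haddad refinement giving $F(x)-F_{\ast}\leq\psc{\nabla F(x)}{x-\bar{x}}-\frac{1}{2L}\norme{\nabla F(x)}^2$) coincide with the paper's argument. For \eqref{2ndimplication} and \eqref{3rdimplication} the paper simply cites the literature (Karimi et al., Zhang, Garrig\'os et al.), whereas you supply the short direct proofs via the descent lemma and the convexity inequality; these are exactly the standard arguments and your constants check out. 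The real divergence is in \eqref{4thimplication}: the paper substitutes \eqref{KL} to get $F(x)-F_{\ast}\leq\psc{\nabla F(x)}{x-\bar{x}}-\frac{\mu}{L}\bigl(F(x)-F_{\ast}\bigr)$ and then invokes, \emph{by citation}, the fact that \eqref{KL} implies \eqref{GC} with matching constant $\theta=\mu$; you instead prove that fact from scratch via the gradient-flow length estimate (integrating $\frac{d}{ds}\sqrt{h}\leq-\frac{\sqrt{2\mu}}{2}\norme{\dot{y}}$) and then chain \eqref{GC} with \eqref{KL} to obtain the sharp error bound $\norme{\nabla F(x)}\geq\mu\,\dist(x,X_{\ast})$, which dominates $\frac{\mu^2}{2L}\norme{x-\bar{x}}^2$ directly. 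The two chains are algebraically equivalent, but your route makes the whole proposition self-contained and correctly identifies why the naive constant $\eta=\theta/2$ from \eqref{3rdimplication} is off by the factor of two that would spoil $\beta=\mu^2/L$. Two small points you gloss over, both standard but worth a line each: global existence and uniqueness of the flow $\dot{y}=-\nabla F(y)$ in the Hilbert space follows from the $L$-Lipschitz gradient (Cauchy--Lipschitz), and the claim that the endpoint lies in $X_{\ast}$ needs the observation that $h'=-\norme{\nabla F(y)}^2\leq-2\mu h$ forces $h(t)\to 0$, so the limit of the finite-length (hence Cauchy) trajectory is a minimizer by continuity of $F$; one should also note the degenerate case where $h$ vanishes in finite time, after which the trajectory is stationary and the length bound still applies.
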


Proposition \ref{prop: geom_interplay} collects some already known  results. 
\begin{proof}
For the proof of the first implication in \eqref{1stimplication}, we observe that by using the Cauchy-Schwarz inequality for the scalar product in the quasi-strong convexity condition \eqref{quasistrconvex} and then the Young's inequality for the product, for any  $x\in \Hb$ and a projection $\bar{x}$ of $x$ onto $X_*$ and any $\varepsilon>0$, we have:
\begin{equation}
\begin{aligned}
       F(x)-F(\bar{x}) &\leq \psc{\nabla F(x)}{x-\bar{x}} -\frac{\beta}{2}\norme{x-\bar{x}}^2 \\ &\leq \norme{\nabla F(x)}\norme{x-\bar{x}} -\frac{\beta}{2}\norme{x-\bar{x}}^2 \\
       &\leq  \frac{1}{2\varepsilon}\norme{\nabla F(x)}^{2} + \frac{(\varepsilon-\beta)}{2}\norme{x-\bar{x}}^2
       \end{aligned}
\end{equation}
which, by choosing $\varepsilon=\beta$, is the \eqref{KL} condition.

The second implication of \eqref{1stimplication} can be found on \cite[Theorem $27$]{bolte2017error} and \cite[Theorem $1$]{zhang2020new}. 

The implication \eqref{2ndimplication} can be found in \cite[Theorem $2$]{Karimi2016}, and the ones in \eqref{3rdimplication}, in \cite[Theorem $1$]{zhang2020new} or in \cite[Proposition $3.3$]{garrigos2017convergence} (see also \cite{bolte2010characterizations,bolte2017error,Karimi2016}).

Finally the last implication in \eqref{4thimplication} can be found in \cite[Lemma $2$]{aujol2020HBconvergence} and we give here a brief proof for sake of completeness.

 Indeed, since $F$ is convex with $L$-Lipschitz gradient satisfying condition \eqref{KL}, then by the Baillon-Haddad Theorem (see \eqref{convexLipschitz2} in Lemma \ref{convexLipschitz} in Appendix \ref{appendixa}) we have:
\begin{equation}\label{equationpropositiongeom22}
F(x)-F_{\ast}\leq \psc{\nabla F(x)}{x-\bar{x}}-\frac{1}{2L}\norme{\nabla F(x)}^{2}\leq \psc{\nabla F(x)}{x-\bar{x}}-\frac{\mu}{L}\big(F(x)-F_{\ast}\big)
\end{equation}
Since \eqref{KL} implies \eqref{GC} with $\theta$=$\mu$, by using \eqref{GC} in \eqref{equationpropositiongeom22}, we find:
\begin{equation}
F(x)-F_{\ast}\leq \psc{\nabla F(x)}{x-\bar{x}}-\frac{\mu^2}{2L}\norme{x-\bar{x}}^{2}.
\end{equation}
which shows that $F$ is $\frac{\mu^2}{L}$-quasi strongly convex.

\end{proof}

\begin{remark}\label{remarkquasistrconvexty} 
Notice that for non-convex functions the inverse implication of \eqref{4thimplication} in Proposition \eqref{prop: geom_interplay} does not hold true in general. Motivated by an example given in \cite{Karimi2016}, for any $\varepsilon>0$, one can consider the following non-convex function:
\begin{equation}\label{nonqstrconvex}
F(x)=\begin{cases}
(x+\varepsilon)^2+3\sin^2(x+\varepsilon) &~ \text{ if } ~ x<-\varepsilon \\ 0 &~ \text{ if } ~ -\varepsilon\leq x\leq \varepsilon \\ (x-\varepsilon)^2+3\sin^2(x-\varepsilon) &~ \text{ if } ~ x>\varepsilon
\end{cases}
\end{equation}
The function $F$ is continuously differentiable with $L$-Lipschitz gradient with $L\leq 14$, satisfies \eqref{KL} with some constant $\mu=\frac{1}{32}$. However this function is not (globally) quasi-strongly convex (see also Figure \ref{figure1nonconvex}).
\end{remark}
\begin{figure}[ht]
\centering
\includegraphics[scale=1.15,trim={0cm 0cm 0cm 4mm}]{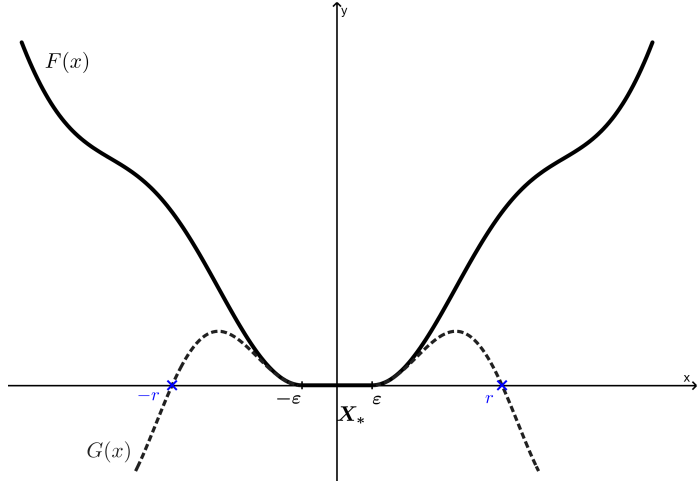}
\caption{
In solid line, the graph of the function $F(x)$ as defined in \eqref{nonqstrconvex} for some $\varepsilon>0$. In dashed line the function $G(x)=\langle F^{\prime}(x),x-\bar{x}\rangle - F(x)$. One can observe that there exists some neighborhood $(-r,r)$, outside of which $G(x)\leq 0$. This implies that it cannot exist any $\mu>0$ such that $G(x)-\frac{\mu}{2}|x-\bar{x}|^2\geq 0$ outside of $(-r,r)$, which implies that the function is not quasi-strongly convex.}
\label{figure1nonconvex}
\end{figure}

\section{Main results}\label{sectionresults}
In this section we present the main results about convergence of the trajectories of the dynamical system \eqref{DSGD}, 
both in the nonconvex and convex case. In Theorem~\ref{coroptimalnon} and Theorem~\ref{coroptimal} we provide some upper bounds for the decay of the 
trajectory of \eqref{DSGD}, in terms of objective function values, for a specific choice of the damping parameter $\alpha$. 
This choice of $\alpha$ is "optimal" in the sense that it is the one that ensures the fastest decay rate of the trajectory, according to the upper bounds found in the convergence analysis (see Theorems \ref{basicteorates} and \ref{basicteoratesconvex} in Section \ref{sectionConv}).

\begin{teo}\label{coroptimalnon}
Let $F\colon \Hb\to\R$ be a differentiable function with $L$-Lipschitz gradient. Assume that $\argmin F\neq \emptyset$ and $F$ also satisfies \eqref{KL} with $\mu>0$ and denote $F_{\ast}=\min F$ and $\kappa=\frac{L}{\mu}$. If $\big(x(t)\big)_{t\geq0}$ is the solution-trajectory of the dynamical system \eqref{DSGD}, then for all $\varepsilon>0$, the following bounds hold true:
\begin{itemize}
    \item If \(\kappa\leq\frac98\) and \(\alpha =\frac{\sqrt{\mu}}{2\sqrt{2}}\left(5+\sqrt{9-8\kappa}\right)-\frac\varepsilon2\), then 
    \begin{equation}\label{ratescormu<L2}
        F(x(t))-F_\ast  \leq \left(F(x(0))-F_\ast\right)\left(\frac{4\varepsilon+2\sqrt{2\mu}}{\varepsilon}\right)e^{-\left(\sqrt{2\mu}-\varepsilon\right)t}
    \end{equation}
    \item If \(\kappa>\frac98\) and $\alpha=(2\sqrt{\kappa}-\sqrt{\kappa-1})\sqrt{\mu}$, then
    \begin{equation}\label{ratescormuequalL}
        F(x(t))-F_\ast \leq \left(F(x(0))-F_\ast\right)\frac{4\sqrt{\kappa-1}\left(3\sqrt{\kappa-1}+\sqrt{\kappa}\right)}{8\kappa-9}e^{-2\left(\sqrt{\kappa}-\sqrt{\kappa-1}\right)\sqrt{\mu}t}
    \end{equation} 
\end{itemize}
\end{teo}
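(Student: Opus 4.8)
The plan is to establish a linear rate for each admissible damping $\alpha$ by a Lyapunov argument, and then to select $\alpha$ so as to maximize the resulting exponent; the threshold $\kappa = 9/8$ and the two prescribed values of $\alpha$ should appear precisely as the point where the optimal damping changes nature. Because neither convexity nor uniqueness of the minimizer is assumed (and a projection onto $X_\ast$ need not even exist in infinite dimension, as noted after \eqref{quasistrconvex}), I would build the Lyapunov function out of $\nabla F(x(t))$ rather than $x(t)-x_\ast$, so that \eqref{KL} can be invoked directly. Concretely, for parameters $a \ge 1$ and $b \ge 0$ to be tuned, consider
\[
\mathcal{V}(t) = a\big(F(x(t)) - F_\ast\big) + \tfrac12\,\norme{\dot x(t) + b\,\nabla F(x(t))}^2 .
\]
Its chief virtue is that $\mathcal V(t) \ge a\big(F(x(t)) - F_\ast\big) \ge F(x(t)) - F_\ast$, which is exactly the coercivity needed at the end.

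The first step is to differentiate $\mathcal V$ along the trajectory, substitute $\ddot x = -\alpha\dot x - \nabla F(x)$ from \eqref{DSGD}, and observe that the only term involving a second derivative of $F$ is of the form $\psc{\,\cdot\,}{\tfrac{d}{dt}\nabla F(x)}$, which I would control a.e. through the bound $\norme{\tfrac{d}{dt}\nabla F(x(t))} \le L\,\norme{\dot x(t)}$ supplied by $\mathcal{A}.1$ (no $C^2$ regularity being available). A convenient simplification is to set $a = 1 + \alpha b$, which cancels the sign-indefinite inner product $\psc{\nabla F}{\dot x}$ coming from the cross terms. After using \eqref{KL} in the form $F - F_\ast \le \tfrac{1}{2\mu}\norme{\nabla F}^2$ to absorb the potential into $\norme{\nabla F}^2$, the inequality $\dot{\mathcal V} \le -\gamma\mathcal V$ reduces to the negative semidefiniteness of a quadratic form $A\,\norme{\dot x}^2 + B\,\norme{\nabla F}^2 + C\,\norme{\dot x}\norme{\nabla F} \le 0$ whose coefficients $A, B, C$ are explicit in $\alpha, \gamma, b, \mu, L$; this amounts to the three scalar conditions $A \le 0$, $B \le 0$, and $C^2 \le 4AB$.

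The heart of the matter is then the optimization: for fixed $\alpha$, maximize $\gamma$ over $b \ge 0$ subject to these conditions, and afterwards maximize the resulting $\gamma(\alpha)$ over $\alpha$. I expect the discriminant condition $C^2 = 4AB$ to be the active one, so that $\gamma(\alpha)$ is recovered as a root of a quadratic; the two regimes of the statement should arise according to whether the unconstrained maximizer is admissible. For $\kappa > 9/8$ the optimum is attained at an interior damping $\alpha = (2\sqrt{\kappa} - \sqrt{\kappa - 1})\sqrt{\mu}$, giving the exponent $2(\sqrt{\kappa} - \sqrt{\kappa-1})\sqrt{\mu}$, whereas for $\kappa \le 9/8$ the exponent saturates at the ceiling $\sqrt{2\mu}$ imposed by feasibility and can only be approached from below, which is the role of the slack $\varepsilon$ and of the damping $\alpha = \tfrac{\sqrt\mu}{2\sqrt2}(5 + \sqrt{9 - 8\kappa}) - \tfrac\varepsilon2$. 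Equating the two branches at the crossover yields $8\kappa = 9$.

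Finally, Grönwall's inequality gives $\mathcal V(t) \le \mathcal V(0)e^{-\gamma t}$, and since $\dot x(0) = 0$ one has $\mathcal V(0) = a\big(F(x_0) - F_\ast\big) + \tfrac{b^2}{2}\norme{\nabla F(x_0)}^2 \le (a + b^2 L)\big(F(x_0) - F_\ast\big)$, using the standard $L$-smoothness bound $\norme{\nabla F(x)}^2 \le 2L\big(F(x) - F_\ast\big)$. Combining with the coercivity $F(x(t)) - F_\ast \le \mathcal V(t)$ produces
\[
F(x(t)) - F_\ast \le (a + b^2 L)\big(F(x_0) - F_\ast\big)e^{-\gamma t},
\]
and substituting the optimal $(a, b, \gamma)$ of each regime should reproduce the multiplicative constants $\tfrac{4\varepsilon + 2\sqrt{2\mu}}{\varepsilon}$ and $\tfrac{4\sqrt{\kappa - 1}(3\sqrt{\kappa - 1} + \sqrt{\kappa})}{8\kappa - 9}$, whose blow-up as $\varepsilon \to 0$, respectively $\kappa \downarrow 9/8$, mirrors the degeneration of the feasible set at the boundary. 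The main obstacle I anticipate is twofold: carrying out the differentiation of $\mathcal V$ rigorously for a merely $C^{1,1}$ objective (so that the Hessian-type term is handled through the a.e. chain rule and the Lipschitz bound), and managing the coupled optimization in $(\gamma, b, \alpha)$ cleanly enough to isolate both the threshold $9/8$ and the exact constants.
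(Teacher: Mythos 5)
There is a genuine gap, and it is quantitative, not merely a matter of unfinished computation: the Lyapunov function you propose cannot certify the exponents in the statement, so the optimization you defer (``I expect the discriminant condition to be the active one\dots'') would not reproduce the theorem. Expand $\mathcal V = a\big(F(x)-F_\ast\big)+\tfrac12\norme{\dot x + b\nabla F(x)}^2$ and differentiate with your normalization $a=1+\alpha b$: the sign-indefinite inner products cancel as you say, and after the a.e.\ bounds $\psc{\dot x}{\tfrac{d}{dt}\nabla F(x)}\le L\norme{\dot x}^2$ and $\psc{\nabla F(x)}{\tfrac{d}{dt}\nabla F(x)}\le L\norme{\nabla F(x)}\norme{\dot x}$ one gets
\[
\dot{\mathcal V}\;\le\;-(\alpha-bL)\norme{\dot x}^2\;-\;b\norme{\nabla F(x)}^2\;+\;b^2L\,\norme{\dot x}\,\norme{\nabla F(x)}.
\]
Now in your certificate $\dot{\mathcal V}\le-\gamma\mathcal V$, the quantity $\gamma\mathcal V$ contains $\gamma a\big(F-F_\ast\big)+\tfrac{\gamma b^2}{2}\norme{\nabla F}^2$, and the only available decay in these variables is $-b\norme{\nabla F}^2$, with \eqref{KL} the only way to trade $F-F_\ast$ against $\norme{\nabla F}^2$ (any splitting of $-b\norme{\nabla F}^2$ between the two balances yields the same aggregate condition). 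Your own condition $B\le 0$ — which is anyway forced by $C^2\le 4AB$ with $A\le 0$ and $C=b^2L+\gamma b>0$ — therefore imposes
\[
\gamma\;\le\;\frac{2\mu b}{a+\mu b^2}\;=\;\frac{2\mu b}{1+\alpha b+\mu b^2}\;\le\;\frac{2\mu b}{1+\mu b^2}\;\le\;\sqrt{\mu},
\]
and, for your prescribed damping $\alpha\ge\tfrac{5}{2\sqrt2}\sqrt\mu-\tfrac\varepsilon2$, even $\gamma\le\tfrac{2\mu}{\alpha+2\sqrt\mu}\approx 0.53\sqrt\mu$. The first bullet claims the rate $\sqrt{2\mu}-\varepsilon>\sqrt\mu$ for small $\varepsilon$, so it is out of reach of your scheme; the second bullet's exponent $2(\sqrt\kappa-\sqrt{\kappa-1})\sqrt\mu=\tfrac{2\sqrt\mu}{\sqrt\kappa+\sqrt{\kappa-1}}$ also exceeds the cap $\tfrac{2\sqrt\mu}{2+2\sqrt\kappa-\sqrt{\kappa-1}}$ whenever $2\sqrt{\kappa-1}<2+\sqrt\kappa$, i.e.\ for all $\kappa$ up to roughly $5.9$ (and the discriminant condition only shrinks the feasible $\gamma$ further). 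The culprit is precisely the term $\tfrac{b^2}{2}\norme{\nabla F}^2$ you added to make $\mathcal V$ coercive: it inflates the $\norme{\nabla F}^2$-content of $\gamma\mathcal V$ and generates the cross term $b^2L\norme{\dot x}\norme{\nabla F}$, and both degrade the certified rate.

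The paper escapes this trade-off by giving up coercivity. It uses $V=a\big(F(x)-F_\ast\big)+\psc{\nabla F(x)}{\dot x}+\tfrac\delta2\norme{\dot x}^2$ as in \eqref{Vdefinition}, with no squared-gradient term (so (H2) involves only $\norme{\dot x}^2$ and no discriminant condition arises), proves $V(t)\le a\big(F(x_0)-F_\ast\big)e^{-Rt}$ with $R=\alpha+\delta-a$ under \eqref{HR}, \eqref{H1}, \eqref{H2} (Lemma \ref{lemma:expV}), and then recovers the decay of $W=F(x(\cdot))-F_\ast$ not from $W\le V$ — false here, since $\psc{\nabla F}{\dot x}$ is sign-indefinite — but from the scalar ODE $\dot W\le -aW+aW(0)e^{-Rt}$ via Gr\"onwall (Lemma \ref{lemmaexponentialforW}), paying the exponent $m=\min\{a,R\}$ and the constant $1+\tfrac{a}{\abs{a-R}}$. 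This is the idea missing from your proposal: the weight $a$ itself enters the final exponent, and maximizing $\min\{a,R\}$ with $a=\delta+\tfrac{2L}{\delta}-\alpha$, $R=2\big(\alpha-\tfrac{L}{\delta}\big)$ (Theorem \ref{basicteorates}) is what produces both the threshold $\kappa=\tfrac98$ (whether $\big(\delta+\tfrac L\delta\big)^2<\tfrac92\mu$ is attainable, i.e.\ whether the balance point $a=R$ lies in the feasible $\alpha$-interval) and the two damping choices. Relatedly, your reading of the $\varepsilon$ is not the right mechanism: it does not mark a feasibility ceiling approached from below, but compensates the blow-up of the Gr\"onwall constant $1+\tfrac{a}{\abs{a-R}}$ at the resonance $a=R$, which is exactly where the unconstrained optimum sits when $\kappa\le\tfrac98$. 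Your peripheral steps — a.e.\ differentiation under $\mathcal A.1$ alone, and the initialization bound $\mathcal V(0)\le(a+b^2L)\big(F(x_0)-F_\ast\big)$ — are sound, but they cannot rescue the central estimate.
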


The next Theorem shows that for convex functions, one can obtain slightly better results. 

\begin{teo}\label{coroptimal}
Let $F\colon \Hb\to\R$ be a convex differentiable function with $L$-Lipschitz gradient. Assume that $\argmin F\neq \emptyset$ and $F$ also satisfies \eqref{KL} with $\mu>0$ and denote $F_{\ast}=\min F$ and $\kappa=\frac{L}{\mu}$. If $\big(x(t)\big)_{t\geq0}$ is the solution-trajectory of the dynamical system \eqref{DSGD}, then for all $\varepsilon>0$, the following bounds hold true:
\begin{itemize}
 \item If $\kappa=1$,  $\varepsilon>0$ and $\alpha=2\sqrt{\mu}-\varepsilon$, then
 \red{
 \begin{equation}
 \norme{\nabla F(x(t))}^{2} \leq 2\big(F(x(0))-F_{\ast}\big)\bigg(1+\frac{\sqrt{\mu}}{\varepsilon}\bigg)e^{-2\bigl(\sqrt{\mu}-\varepsilon\bigr)t}
 \end{equation}
and
}
\begin{equation}\label{ratescormuequalL2}
F(x(t))-F_{\ast}  \leq\frac{\big(F(x(0))-F_{\ast}\big)}{\mu}\bigg(1+\frac{\sqrt{\mu}}{\varepsilon}\bigg)e^{-2\bigl(\sqrt{\mu}-\varepsilon\bigr)t}
\end{equation} 
    \item If $\kappa>1$, and $\alpha=(2\sqrt{\kappa}-\sqrt{\kappa-1})\sqrt{\mu}$, then
\red{    
\begin{equation}\label{nondiffratescormu<L1}
\norme{\nabla F(x(t))}^{2} \leq 2\big(F(x(0))-F_{\ast}\big)\bigg(1+\sqrt{\frac{\kappa}{\kappa-1}}\bigg)e^{-2\bigl(\sqrt{\kappa}-\sqrt{\kappa-1}\bigr)\sqrt{\mu}t}
\end{equation}
     and
     }
\begin{equation}\label{ratescormu<L}
F(x(t))-F_{\ast} \leq\frac{\big(F(x(0))-F_{\ast}\big)}{\mu}\bigg(1+\sqrt{\frac{\kappa}{\kappa-1}}\bigg)e^{-2\bigl(\sqrt{\kappa}-\sqrt{\kappa-1}\bigr)\sqrt{\mu}t}
\end{equation}
\end{itemize}
\end{teo}

Before presenting the convergence analysis and the corresponding proofs, let us make some comments related to Theorems \ref{coroptimalnon} and \ref{coroptimal}.

\begin{remark}[Localization on the sub-level sets]\label{remlocalization}
While Theorem \ref{coroptimalnon} refers to functions satisfying the Polyak-\L ojasiewicz condition globally (see Definition \ref{2KLdefinition}), the results of the aforementioned theorem still hold true for any function $F$ that satisfies \eqref{KL} on the sublevel set \(\Omega=\{ x\in \Hb ~ : ~ F(x) \leq F(x(0)) \}\), which is invariant with respect to the system \eqref{DSGD} (i.e. $x(t)\in \Omega$, $\forall t\geq 0$).

Indeed by considering the total energy of system \eqref{DSGD}, $U(t)=F(x(t))+\frac{1}{2}\norme{\dot{x}(t)}^2$, we find:
\begin{equation}
    \dot{U}(t)=\psc{\nabla F(x(t))+\ddot{x}(t)}{\dot{x}(t)}\overset{\eqref{DSGD}}{=}-\alpha\norme{\dot{x}(t)}^2
\end{equation}
which shows that $U(t)$ is non-increasing. By using the non-increasing property of $U$ we deduce that
\(F(x(t))\leq F(x(0))\), for all $t\geq0$ and therefore the trajectory \(\{x(t)\}_{t\geq 0}\) generated by \eqref{DSGD} remains in the sublevel set $\Omega$, where $F$ satisfies \eqref{KL} and the decay rates found in Theorem \ref{coroptimalnon} remain valid. 
\end{remark}

\begin{remark}
In both Theorems \ref{coroptimalnon} and \ref{coroptimal}, one can observe that, the associated worst-case linear convergence rates are affected by the magnitude of the condition number $\kappa=\frac{L}{\mu}\geq 1$ (equivalently the magnitude of the difference $L-\mu\geq0$). In particular, while in the case $\kappa=1$ (i.e. $\mu=L$) the rate in \eqref{ratescormuequalL} is worst-case optimal (see next Remark \ref{remarkoptimalquadratic}), in the case $\kappa>1$, the exponents in \eqref{ratescormuequalL} and \eqref{ratescormu<L}  can become small, if $\kappa=\frac{L}{\mu}>>1$.
\end{remark}

\begin{remark}[Optimality of rates in the case $\mu=L$ for convex functions]\label{remarkoptimalquadratic} As remarked in \cite{ghisi2016remarkable}, if $\Hb=\R$ and $F(x)=\frac{\mu}{2}\abs{x}^2$ (quadratic function in one-dimensional case), the system \eqref{DSGD} reduces to a linear ODE with constant coefficients whose solution $x(t)$ has an explicit form. In that case it can be shown that  for all $\varepsilon>0$:
\begin{equation}
    F(x(t))-F_{\ast} \leq C_{\varepsilon}t^{-(r(\alpha)-\varepsilon)t} \qquad \text{with : } ~ r(\alpha)=\frac{\alpha-\min\{0,\sqrt{\alpha^2-4\mu}\}}{2}
\end{equation}
and $\sup\{r(\alpha) ~ : ~ \alpha>0 )\}=2\sqrt{\mu}$. This shows that the estimate \eqref{ratescormuequalL2}, in the case $\mu=L$ of Theorem \ref{coroptimal}, recovers the -worst case- optimal decay rate in the class of convex functions satisfying condition \eqref{KL}.
\end{remark}

\begin{remark}[Comparison with Gradient flow and the works \cite{aujolHBLoja2020,aujol2020HBconvergence}]

\red{
The first-order in time Gradient flow system, i.e. \(\dot{x}(t) +\nabla F(x(t))=0\), provides linear convergence rates for the objective function values with a corresponding convergence factor equal to $2\mu$ (see e.g.
\cite{polyak1963gradient,polyak1964some,polyak2017lyapunov}) both in convex and non-convex setting.
By comparing the convergence factor $2(\sqrt{\kappa}-\sqrt{\kappa-1})\sqrt{\mu}$ found in \eqref{ratescormuequalL} and \eqref{ratescormu<L}  of Theorems \ref{coroptimalnon} and \ref{coroptimal} (respectively), we have: 
\begin{equation}\label{eq:211210a}
\begin{aligned}
    \max\big\{2(\sqrt{\kappa}-\sqrt{\kappa-1})\sqrt{\mu},2\mu\big\} =  \\ =&\begin{cases} 2(\sqrt{\kappa}-\sqrt{\kappa-1})\sqrt{\mu} & ~ \text{ if } ~  2\sqrt{L}-1 \leq \mu \leq 1 \\ 2\mu & ~ \text{ if } ~ (\mu<2\sqrt{L}-1) \text{ or } (\mu>1)
    \end{cases}
\end{aligned}
\end{equation}
describing the regimes for parameters $\mu$ and $L$, for which the worst-case decay rate of each method is faster than the other.

It is also worth mentioning that the Lipschitz character of the gradient is not necessary to deduce the linear rates for the Gradient flow, in contrast to our setting. Nevertheless, Theorem \ref{coroptimalnon} is the first result that states explicit rates in the non-convex setting for the Heavy ball dynamical system. 

}

Concerning the convex setting, similar results are obtained in \cite{aujolHBLoja2020}, where the authors study the system \eqref{DSGD} for convex differentiable functions, with a unique minimizer, satisfying the Quadratic growth condition \eqref{GC} with $\theta>0$ (recall here that in the convex setting the Quadratic growth condition \eqref{GC} is equivalent to the Polyak-\L ojasiewicz condition \eqref{KL} with the same constant ($\alpha=\mu$), see \eqref{1stimplication} in Proposition \ref{prop: geom_interplay} of Section \ref{sectionpreliminaries}). In particular, for the aforementioned class of functions, they provide linear decay rates for the objective error $F(x(t))-\min F$, with a worst-case optimal convergence factor of $(2-\sqrt{2})\sqrt{\mu}$, achieved for $\alpha=(2-\frac{\sqrt{2}}{2})\sqrt{\mu}$ (see Theorem and Corollary $1$ in \cite{aujolHBLoja2020}). Comparing these rates with those found in Theorem \ref{coroptimal}, one has the following:
\begin{equation}\label{eq:210311a}
\begin{aligned}
    \max\big\{2(\sqrt{\kappa}-\sqrt{\kappa-1})\sqrt{\mu},(2&-\sqrt{2})\sqrt{\mu}\big\} =  \\ =&\begin{cases} 2(\sqrt{\kappa}-\sqrt{\kappa-1})\sqrt{\mu} & ~ \text{ if } ~ \kappa \leq \kappa_{\ast}=\frac{19+6\sqrt{2}}{8} \\ (2-\sqrt{2})\sqrt{\mu} & ~ \text{ if } ~ \kappa > \kappa_{\ast}=\frac{19+6\sqrt{2}}{8}
    \end{cases}
\end{aligned}
\end{equation}
From equation~\eqref{eq:210311a} one can see that if $\kappa\leq \kappa_{\ast}$ the rate in Theorem \ref{coroptimal} is faster, while in the case $\kappa>\kappa_{\ast}$, the rate obtained in \cite{aujolHBLoja2020} is better and is proven by exploiting a different Lyapunov function, relying though on the uniqueness of the minimizer of the function. 

In the same spirit, in \cite{aujol2020HBconvergence} the authors prove similar linear convergence rates, in terms of objective function values, for convex and $\beta$-quasi-strongly convex functions admitting a unique minimizer. Theorem $1$ in \cite{aujol2020HBconvergence} states that the best rate for the objective value function is $\exp{(-\sqrt{2\beta}t)}$.  Since condition \eqref{KL} with $\mu>0$ for convex functions with $L$-Lipschitz gradient implies quasi-strong convexity with parameter $\beta=\frac{\mu^2}{L}$ (see \eqref{4thimplication} in Proposition \ref{prop: geom_interplay}), we have $\sqrt{2\beta}=\sqrt{\frac{2\mu}{\kappa}}$. As done before, a straightforward comparison of the two factors $2(\sqrt{\kappa}-\sqrt{\kappa-1})\sqrt{\mu}$ and $\sqrt{\frac{2\mu}{\kappa}}$, shows that:
\begin{equation}
\begin{aligned}
    \max\big\{2(\sqrt{\kappa}-\sqrt{\kappa-1})\sqrt{\mu},&\sqrt{\frac{2\mu}{\kappa}}\big\} =  \\ =&\begin{cases} 2(\sqrt{\kappa}-\sqrt{\kappa-1})\sqrt{\mu} & ~ \text{ if } ~ \frac{L}{\mu} \leq \kappa_{\ast}:=\frac{1+\sqrt{2}}{2} \\ \sqrt{\frac{2\mu}{\kappa}} & ~ \text{ if } ~ \frac{L}{\mu} > \kappa_{\ast}=\frac{1+\sqrt{2}}{2}
    \end{cases}
\end{aligned}
\end{equation}
Theorem $2$ in \cite{aujol2020HBconvergence}) states improved rated when $F$ has Lipschitz gradient. They are given  via the formula $\exp{(-\frac{2}{3}(1+\frac{2}{3}\frac{9\mu^{2}-2\alpha^2}{9L+3\mu-\frac{2}{3}\alpha^2})t)}$, for $\alpha\leq 3\sqrt{\frac{\mu}{2}}$. However, the optimal rate with respect to $\alpha$ in the last formula cannot be explicitly computed. A straightforward comparison with our results is therefore difficult to  establish. 
\end{remark}

\section{Non-smooth setting}\label{sectionnondiff}

In this section we extend some of the previous results to the case of a convex, proper and lower semi-continuous function $F: \Hb \to \overline{\R}=\R\cup\{+\infty\}$, with $X_{\ast}=\argmin F\neq \emptyset$. In this context, we make use of the Moreau envelope  and the proximal operator of $F$ defined (respectively) for any $\lambda>0$, as follows (see e.g. \cite[Definitions $12.20$ and $12.23$]{bauschke2011convex}): 
\begin{align}
(\forall x\in \Hb) \qquad F_{\lambda}(x)&=\underset{y\in \Hb}{\min}\left\{F(y)+\frac{1}{2\lambda}\norme{y-x}^2\right\} \\
\prox_{\lambda F}(x)&=\underset{y\in \Hb}{\argmin}\left\{F(y)+\frac{1}{2\lambda}\norme{y-x}^2\right\}
\end{align}

One of the major advantage of the Moreau envelope $F_{\lambda}$ is that is a convex, continuously differentiable function with $\frac{1}{\lambda}$-Lipschitz gradient (see e.g. \cite[Proposition $12.29$]{bauschke2011convex}). Thus considering he Heavy ball system \eqref{DSGD} for $F_{\lambda}$ is still possible even if $F$ is not smooth or differentiable.

Formally, we state all the useful properties regarding $F_{\lambda}$ and $\prox_{\lambda F}$, that are used in this work in Lemma \ref{lemmaMoreau}. For further details and proofs about the Moreau envelope and the proximal operator we address the interested reader to \cite{rockafellar2009variational} and \cite{bauschke2011convex}.
\begin{lemma}\label{lemmaMoreau}
Let $F:\Hb\to\overline{\R}$ be a convex, proper and lower semi-continuous function with $\argmin F\neq \emptyset$. Let $\lambda>0$ and $F_{\lambda}$ and $\prox_{\lambda F}$ the Moreau envelope and proximal operator of $F$ (respectively). The following assertions hold true for all $x\in \Hb$.
\begin{enumerate}
\item \(\underset{x\in \Hb}{\min} F(x) = \underset{x\in \Hb}{\min}F_{\lambda}(x)\) ~ and ~ \(\underset{x\in \Hb}{\argmin} F(x) = \underset{x\in \Hb}{\argmin}F_{\lambda}(x)\).
\item  \(F_{\lambda}(x)=F(\prox_{\lambda F}(x))+\frac{1}{2\lambda}\norme{x-\prox_{\lambda F}(x)}^2 \leq F(x)\).
\item \(\frac{1}{\lambda}\left(x-\prox_{\lambda F}(x)\right) \in \partial F(x)\).
\item \(F_{\lambda}\) is differentiable with \(\frac{1}{\lambda}\)-Lipschitz gradient and $\nabla F_{\lambda}(x)=\frac{1}{\lambda}\left(x-\prox_{\lambda F}(x)\right)$. 
\end{enumerate}
\end{lemma}

Notice that in this setting the \eqref{KL} condition on $F$ can be generalized as follows (see e.g. \cite[Paragraph $2.3$]{bolte2017error}): 
\begin{equation}\label{nondiffKL}
F(x)-F_{\ast} \leq \frac{1}{2\mu}\dist(0,\partial F(x))^2 \tag{ns-$\mathcal{PL}$}
\end{equation}
where $\partial F$ is the subdifferential of $F$. It is also worth mentioning that condition \eqref{nondiffKL} and the growth condition \eqref{GC} are still equivalent in this setting ($F$ convex) with same relations between the two parameters as the ones described in \eqref{1stimplication} and \eqref{3rdimplication} of Proposition \ref{prop: geom_interplay} (see e.g. \cite[Theorem $27$]{bolte2017error} and \cite[Theorem $1$]{zhang2020new}).

\begin{proposition}\label{propequivMoreau}
Let $\lambda>0$, $F:\Hb \to \overline{\R}$ be a convex, proper and lower semi-continuous function  and $F_{\lambda}$ its Moreau envelope. Then the following assertions hold true:
\begin{enumerate}
\item If $F$ satisfies \eqref{nondiffKL} with parameter $\mu$, then $F_{\lambda}$ satisfies \eqref{KL} with parameter \(\frac{\mu}{\lambda\mu+1}\).
\item If $F_{\lambda}$ satisfies \eqref{KL} with parameter $\mu$, then $F$ satisfies \eqref{nondiffKL} with parameter \(\frac{\mu}{4}\).
\end{enumerate}
\end{proposition}

\begin{proof}
1. By using properties $1.$ and $2.$ in Lemma \ref{lemmaMoreau}), we have:
\begin{equation}\label{equationequivalence}
\begin{aligned}
F_{\lambda}(x)-\min F_{\lambda}&=F(\prox_{\lambda F}(x))+\frac{1}{2\lambda}\norme{x-\prox_{\lambda F}(x)}^2 - F_{\ast} \\
& \leq \frac{1}{2\mu}\dist(0,\partial F(\prox_{\lambda F}(x)))^2+\frac{1}{2\lambda}\norme{x-\prox_{\lambda F}(x)}^2 
\end{aligned}
\end{equation}
where in the last inequality we used the fact that $F$ satisfies condition \eqref{nondiffKL}.

Since $\frac{1}{\lambda}(x-\prox_{\lambda F}(x))\in \partial  F(\prox_{\lambda F}(x))$ by property $3.$ in Lemma \ref{lemmaMoreau}, from \eqref{equationequivalence}, we obtain:
\begin{equation}
\begin{aligned}
F_{\lambda}(x)-\min F_{\lambda} & \leq \frac{1}{2\mu}\dist(0,\partial F(\prox_{\lambda F}(x)))^2+\frac{1}{2\lambda}\norme{x-\prox_{\lambda F}(x)}^2  \\
& \leq \frac{1}{2\mu}\norme{\frac{1}{\lambda}\left(x-\prox_{\lambda F}(x)\right)}^2 +\frac{1}{2\lambda}\norme{x-\prox_{\lambda F}(x)}^2  \\
& = \left(\frac{1}{2\mu}+\frac{\lambda}{2}\right)\norme{ \frac{1}{\lambda}\left(x-\prox_{\lambda F}(x)\right)}^2  \\
& = \frac{(1+\lambda\mu)}{2\mu}\norme{\nabla F_{\lambda}(x)}^2
\end{aligned}
\end{equation}
which allows to conclude that $F_{\lambda}$ satisfies \eqref{KL} with parameter \(\frac{\mu}{\lambda\mu+1}\).

2. From \eqref{1stimplication} of Proposition \ref{prop: geom_interplay} $F_{\lambda}$ satisfies \eqref{GC} with parameter $\mu$, thus, by using properties $1.$ and $2.$ in Lemma \ref{lemmaMoreau} we obtain:
\begin{equation}\label{gqforF}
\begin{aligned}
\frac{\mu}{2}\dist(x,\argmin F_{\lambda})^2 &\leq F_{\lambda}(x)-\min F_{\lambda}=F_{\lambda}(x)-\min F \\
& \leq F(x) -\min F
\end{aligned}
\end{equation}

From \eqref{gqforF}, it follows that $F$ satisfies \eqref{GC}, with parameter $\mu$, therefore $F$ satisfies \eqref{nondiffKL} with parameter $\frac{\mu}{4}$ (see e.g. \cite[Theorem $1$]{zhang2020new}).
\end{proof}

\begin{teo}\label{basicteonondiff}
Let $F\colon \Hb\to\overline{\R}$ be a convex, proper and lower semi-continuous function with $\argmin F\neq \emptyset$ satisfying \eqref{KL} with $\mu>0$. Let also $\lambda>0$ and  $F_{\lambda}$ and $\prox_{\lambda F}$ be the Moreau envelope and the proximal operator of $F$ (respectively) and \(x_{\lambda}(t)\) the trajectory generated by the Heavy ball system \eqref{DSGD} with $F_{\lambda}$, with \(\alpha=\frac{2\lambda\mu+1+\sqrt{\lambda\mu+1}}{\sqrt{\lambda}\left(\lambda\mu+1+\sqrt{\lambda\mu+1}\right)}\). Then the following bounds hold true:
\begin{equation}\label{nondiffratescormu<L}
F(\prox_{\lambda F}(x_{\lambda}(t)))-F_{\ast} \leq \big(F_{\lambda}(x(0))-F_{\ast}\big)\left(\lambda+\frac{1}{\mu}\right)\bigg(1+\sqrt{\lambda\mu+1}\bigg)e^{-\frac{2\mu\sqrt{\lambda}}{\mu\lambda+1+\sqrt{\mu\lambda+1}}t}
\end{equation}
If in addition $F$ is $M$-Lipschitz, then:
\begin{equation}\label{nondiffratescorgradient1}
\begin{aligned}
F(x_{\lambda}(t))-F_{\ast} & \leq 2\max\left\{\sqrt{2}M\lambda,\frac{C(\lambda,\mu)}{\mu}e^{-\frac{\mu\sqrt{\lambda}}{\mu\lambda+1+\sqrt{\mu\lambda+1}}t}\right\}C(\lambda,\mu)e^{-\frac{\mu\sqrt{\lambda}}{\mu\lambda+1+\sqrt{\mu\lambda+1}}t} \\
& =\grandO{e^{-\frac{\mu\sqrt{\lambda}}{\mu\lambda+1+\sqrt{\mu\lambda+1}}t}}
\end{aligned}
\end{equation} 
where  \(C(\lambda,\mu)=\sqrt{\big(F_{\lambda}(x_{\lambda}(0))-F_{\ast}\big)\bigg(1+\sqrt{\lambda\mu+1}\bigg)}\)
\end{teo}

\begin{proof}[\textbf{Proof of Theorem \ref{basicteonondiff}}]
On the one hand, from property $2.$ in Lemma \ref{lemmaMoreau} we have $F(\prox_{\lambda F}(x_{\lambda}(t)))-F_{\ast}\leq F_{\lambda}(x_{\lambda}(t))-\min F_{\lambda}$. On the other hand, from Proposition \ref{propequivMoreau}, it follows that $F_{\lambda}$ satisfies \eqref{KL} with parameter $\frac{\mu}{\lambda\mu+1}$ and since it is convex with $\frac{1}{\lambda}$-Lipschitz gradient, we can directly apply Theorem \ref{coroptimal} and deduce \eqref{nondiffratescormu<L} from \eqref{ratescormu<L}.

If now we assume that $F$ is $M$-Lipschitz, by the characterization of $F_{\lambda}$ and $\prox_{\lambda F}$ (see properties $1$, $2$ and $4$ in Lemma  \ref{lemmaMoreau}), we have:
\begin{equation}\label{proofgradientnondiff}
\begin{aligned}
F(x_{\lambda}(t))-F_{\ast} -\left(F_{\lambda}(x_{\lambda}(t))-F_{\ast}\right)&=F(x_{\lambda}(t))-F(\prox_{\lambda F}(x_{\lambda}(t)))-\frac{1}{2\lambda}\norme{x_{\lambda}(t)-\prox_{\lambda F}(x_{\lambda}(t))}^2 \\
&\leq M\norme{x_{\lambda}(t)-\prox_{\lambda F}(x_{\lambda}(t))}-\frac{1}{2\lambda}\norme{x_{\lambda}(t)-\prox_{\lambda F}(x_{\lambda}(t))}^2 \\
&= M\lambda\norme{\nabla F_{\lambda}(x_{\lambda}(t))} - \frac{\lambda}{2}\norme{\nabla F_{\lambda}(x_{\lambda}(t))}^2 \\
& \leq M\lambda\norme{\nabla F_{\lambda}(x_{\lambda}(t))} - \frac{\lambda\mu}{\lambda\mu+1}\left(F_{\lambda}(x_{\lambda}(t))-F_{\ast}\right)
\end{aligned}
\end{equation}
where in the second inequality we used the $M$-Lipschitz character of $F$ and in the last one that $F_{\lambda}$ satisfies \eqref{nondiffKL} with parameter \(\frac{\mu}{\lambda\mu+1}\).

Thus from \eqref{proofgradientnondiff}, it follows that
\begin{equation}
F(x_{\lambda}(t))-F_{\ast} \leq M\lambda\norme{\nabla F_{\lambda}(x_{\lambda}(t))} + \frac{1}{\lambda\mu+1}\left(F_{\lambda}(x_{\lambda}(t))-F_{\ast}\right).
\end{equation}
Inequality \eqref{nondiffratescorgradient1}, follows from the bounds \eqref{nondiffratescormu<L1} and \eqref{ratescormu<L} of Theorem \ref{coroptimal} for \(\norme{\nabla F_{\lambda}(x_{\lambda}(t))}\) and \(F_{\lambda}(x_{\lambda}(t))-F_{\ast}\) respectively.
\end{proof}

\section{Numerical experiments}\label{sectionnumerics}

In this section we present two synthetic examples to illustrate the results discussed in Theorems \ref{coroptimalnon} and \ref{coroptimal}, regarding the Heavy ball system \eqref{DSGD}. In particular we test the performance of the trajectory generated by \eqref{DSGD}, in terms of objective function values \(F(x(t))-F_{\ast}\), with different values for $\alpha$ and the first-order in time Gradient flow system. All the experiments were performed in the Matlab programming language, by using the ODE solver "ode45" with absolute tolerance $\approx 10^{-13}$.

\paragraph{Example 1 (Convex, $\mu$-\eqref{KL})}
In the first example we consider a quadratic function of the form  $F(x)=\psc{Ax}{x}$, where\(A\in \R^{d\times d}\), \(x\in \R^d\) and $d=100$. $A$ is a random symmetric matrix, with its smallest eigenvalue equal to $0$ and some given smallest (and larger) positive eigenvalue $\mu$ (and $L$ respectively). With these choices, $F$ is a convex but not strongly convex function with $L$-Lipschitz gradient and satisfies the \eqref{KL} condition with parameter $\mu$.   We fix $L=1$ and make $3$ different choices for $\mu$, corresponding to $\kappa=\frac{L}{\mu}=10$, $\kappa=100$ and $\kappa=200$. In this example we compare the performance of the Gradient flow and the Heavy ball scheme \eqref{DSGD} with $4$ different choices for the damping parameter $\alpha$. One is set equal to $\alpha_{\ast}=(2\sqrt{\kappa}-\sqrt{\kappa-1})\sqrt{\mu}$ as Theorem \ref{coroptimal} indicates, two are slight variations of $\alpha_{\ast}$, i.e. $\alpha_{\pm}=\alpha_{\ast}\pm \varepsilon$, with $\varepsilon=0.1$ and the last on is chosen equal to $2\sqrt{\mu}$ (see e.g. \cite{polyak1964some,polyak2017lyapunov,haraux1998convergence}). The results are reported in Figure \ref{figQuadratic}. While the choice $\alpha=2\sqrt{\mu}$ has better performance in the well-conditioned case ($\kappa=10$), it takes more iterations to reach a smaller error than the one for the choices $\alpha_{\ast}$ (or $\alpha_{\ast} \pm\varepsilon$) in the ill conditioned cases ($\kappa=100$ or $\kappa=200$). In addition, we observe that the case $\alpha=(2\sqrt{\kappa}-\sqrt{\kappa-1})\sqrt{\mu}$ leads to less oscillatory behavior for $F(x(t))-F_{\ast}$, hinging a corresponding trajectory which overshoots less the set of minimizers $X_{\ast}$.

\begin{figure}[ht]
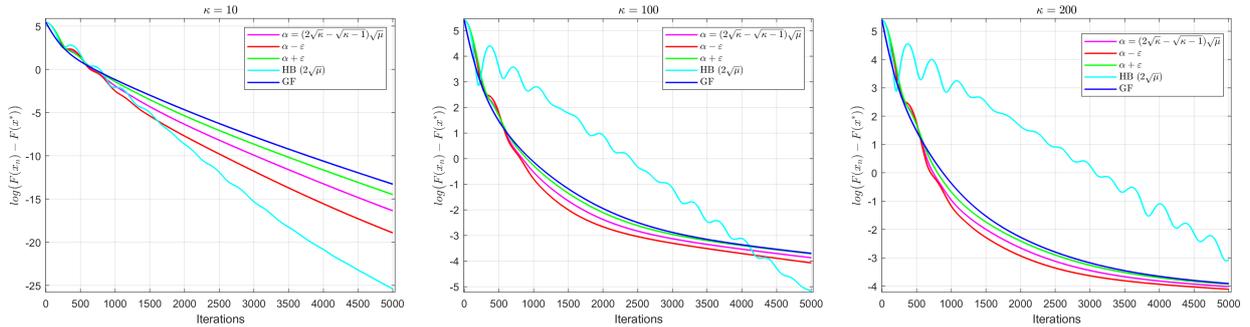

\centering
\includegraphics[scale=0.16,trim={6.9cm 2cm 2cm 2cm}]{Quadratic2K10.png} 
\includegraphics[scale=0.16,trim={1.4cm 2cm 2cm 2cm}]{Quadratic2K100.png}
\includegraphics[scale=0.16,trim={1.4cm 2cm 6.8cm 2cm}]{Quadratic2K200.png}
\caption{Comparison of the Heavy Ball scheme \eqref{DSGD} with different values of $\alpha$ and the gradient flow scheme in terms of objective function values, for minimizing the quadratic function \(F(x)=\psc{Ax}{x} ~, ~ x\in \R^d\). The blue line corresponds to the gradient flow system, while the others to the HB system \eqref{DSGD} with $\alpha=\alpha_{\ast}:=(2\sqrt{\kappa}-\sqrt{\kappa-1})\sqrt{\mu}$ (magenta), $\alpha=\alpha_{\ast}-0.1$ (red), $\alpha=\alpha_{\ast}+0.1$ (green) and $\alpha=2\sqrt{\mu}$ (light blue). 
In each image a new matrix is chosen randomly with different conditional number \(\kappa=\frac{L}{\mu}\), starting from $\kappa=10$ (well-conditioned) to $\kappa=200$ (ill-conditioned). }\label{figQuadratic}
\end{figure}

\paragraph{Example 2 (non-convex, \(\mu\)-\eqref{KL})}

In the second example we consider the function $F:\R^{2}\to \R_{+}$, such that \(F(x,y)=\frac{\abs{y-\sin(x)}^2}{8}\). As discussed in the introduction (see Figure \ref{figKLnonconvex}), the function $F$ is not convex, but satisfies the \eqref{KL} condition with $\mu=\frac{1}{4}$. Its set of minimizers is the whole curve \(X_{\ast}=\{(x,y)\in \R^{2} ~: ~ y=\sin(x)\}\). It is also worth mentioning, that while $\nabla F$ is not globally Lipschitz, it is $L$-Lipshitz on sub-level sets. Therefore the results stated in Theorem \ref{coroptimalnon} are applicable (see also Remark \ref{remlocalization}). In this case, in Figure \ref{figODEsin}, we can observe that the performance of Heavy ball in terms of the objective function values is similar both for $\alpha=2\sqrt{\mu}$ and $\alpha=(2\sqrt{\kappa}-\sqrt{\kappa-1})\sqrt{\mu}$ and better than the one of Gradient flow.

\begin{figure}[ht]
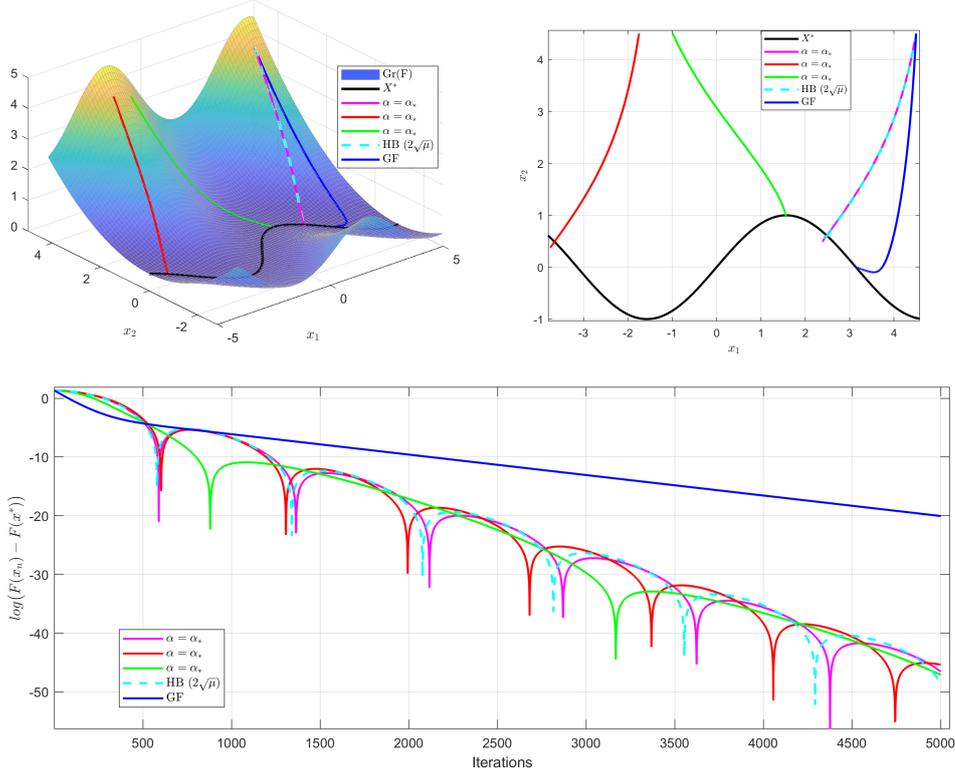

\centering
\includegraphics[scale=0.19,trim={8cm 0.9cm 1cm 2cm}]{ODEsinGraph.png} 
\includegraphics[scale=0.17,trim={1mm 0.4cm 1cm 3cm}]{ODEsinLevel.png}

\includegraphics[scale=0.2,trim={4.9cm 1cm 8mm 1mm}]{ODEsinObj.png}
\caption{In the first row we illustrate the convergence of trajectories of Gradient flow and the Heavy ball scheme with different starting points on the objective function landscape (left) and the $0$-level-set (right). The solid blue, solid magenta and dashed light blue represent the Gradient flow, Heavy ball with $\alpha=(2\sqrt{\kappa}-\sqrt{\kappa-1})\sqrt{\mu}$ and $\alpha=2\sqrt{\mu}$ respectively, with starting point set to $(4.5,4.5)$. The red and green line correspond to the Heavy ball with $\alpha=(2\sqrt{\kappa}-\sqrt{\kappa-1})\sqrt{\mu}$ and starting points $(-1.75,4.5)$ and $(-1,4.5)$ respectively.  All the initial velocities are set to $0$.
In the last row the corresponding objective function values are illustrated for each scheme.}\label{figODEsin}
\end{figure}

\section{Convergence analysis}\label{sectionConv}

Throughout this section, we consider a function $F:\Hb\to\R$ satisfying assumptions $\mathcal{A}.1$ and $\mathcal{A}.2$ and the Polyak-\L ojasiewicz condition \eqref{KL} and we denote $F_{\ast}=\min F$.

As mentioned before, the analysis that we follow in this Section in order to prove the main results in Theorems \ref{coroptimalnon} and \ref{coroptimal}, relies on Lyapunov techniques. The Lyapunov function \(V\) that we will introduce shortly was also used in previous works to study linear convergence of the trajectories of \eqref{DSGD} or other similar systems, see for example \cite{begout2015damped,polyak2017lyapunov,boct2018approaching}.

Let \(x(t)\) be a solution of \eqref{DSGD} and for sake of simplicity, we use the following notation for the objective error-function: 
\begin{equation}
W(t)=F(x(t))-F_{\ast}
\end{equation}
For some parameters \(a,\delta\in\R\) we define the following Lyapunov function for any $t\geq 0$:
\begin{equation}\label{Vdefinition}
\begin{aligned}
    V(t) &= aW(t)+\psc{\nabla F(x)}{\dot{x}(t)} + \frac{\delta}{2}\norme{\dot{x}(t)}^{2} \\
         &= aW(t)+\dot{W}(t) +\frac{\delta}{2}\norme{\dot{x}(t)}^{2}
\end{aligned}
\end{equation}
which will play a central role in our analysis.

For systems like \eqref{DSGD}, it is possible to choose other standard energy-functionals (see for example \cite{su2016differential,attouch2018convergence,apidopoulos2018differential,aujol2020HBconvergence}) which can be used to deduce interesting convergence properties for $x(t)$, which however usually require uniqueness of the minimizer. Regarding this issue, it is important to stress that the choice of $V$ does not depend explicitly on any minimizer of the objective function \(F\). On the other hand, from the forthcoming analysis it transpires that the Lipschitz character of the gradient cannot be avoided with this choice of energy-function.

In order to improve readability of the forthcoming analysis, we briefly sketch the salient steps, which are divided in three paragraphs. In paragraph \ref{subsection lyapunov} we provide some basic estimates for the energy \(V\), as also for $W$.
First, in Lemma \ref{lemma:expV} we show that under simple conditions on the parameters \(a\) and \(\delta\) and the damping parameter \(\alpha\), the Lyapunov function $V$ decreases linearly thanks to the hypothesis \eqref{KL}.
In Theorem \ref{basicteorates} in paragraph \ref{subsection non-convex} (see also Lemma \ref{lemmaexponentialforW}) we proceed by showing that the linear convergence of \(V\) implies the linear convergence of the objective function \(W\).

In the last part of paragraph \ref{subsection non-convex} we give the full proof of Theorem \ref{coroptimalnon}, by finding the optimal choices for the parameters $a$, $\delta$ and the damping coefficient $\alpha$, in order to have the fastest decay rate according to the Lyapunov analysis.

Finally, in paragraph \ref{subsection convex}, a similar sequence of proofs is carried out under the additional hypothesis of convexity of the objective function \(F\).

\subsection{Lyapunov estimates}\label{subsection lyapunov}

We start by proving that the Lyapunov function $V$ converges linearly to zero whenever \(t\to +\infty\).
\begin{lemma}\label{lemma:expV}
Let $F:\Hb\to\R$ satisfying $\mathcal{A}.1$, $\mathcal{A}.2$ and condition \eqref{KL}. Let also \(V\) be the function defined in \eqref{Vdefinition} and let:
\begin{equation}\label{eq:genericR}
	R \coloneqq \alpha - a + \delta. 
\end{equation}
Then, for any \(a,\delta\in\R_{+}\) satisfying
\begin{align}
&R > 0                                              \label{HR}\tag{HR} \\
	&aR \leq 2\mu 									\label{H1}\tag{H1} \\
	&L -\alpha\delta + \frac{\delta}{2}R \leq 0 	\label{H2}\tag{H2}
\end{align}
we have the following upper bound
\begin{equation}\label{eq:expV}
	V(t) \leq a(F(x_0)-F_{\ast})e^{-Rt}
\end{equation}
\end{lemma}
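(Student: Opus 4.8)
The plan is to establish the differential inequality $\dot V(t) + R\,V(t)\le 0$ (holding for a.e. $t\ge 0$) and then integrate it with the integrating factor $e^{Rt}$. First I would record the initial value: since $\dot x(0)=0$, the two velocity-dependent terms of \eqref{Vdefinition} vanish at $t=0$, so $V(0)=aW(0)=a(F(x_0)-F_\ast)$, which is exactly the constant in \eqref{eq:expV}. Once the differential inequality is proved, $\frac{d}{dt}\bigl(e^{Rt}V(t)\bigr)=e^{Rt}\bigl(\dot V+RV\bigr)\le 0$ gives $e^{Rt}V(t)\le V(0)$, i.e. \eqref{eq:expV}; hypothesis \eqref{HR}, $R>0$, is what makes this bound genuinely decaying.

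Next I would differentiate $V$ along the trajectory. Writing $\dot W=\psc{\nabla F(x)}{\dot x}$ and using the second form of $V$ in \eqref{Vdefinition}, I get $\dot V=a\dot W+\ddot W+\delta\psc{\dot x}{\ddot x}$. Substituting $\ddot x=-\alpha\dot x-\nabla F(x)$ from \eqref{DSGD} yields $\psc{\dot x}{\ddot x}=-\alpha\norme{\dot x}^2-\dot W$ and $\ddot W=\psc{\frac{d}{dt}\nabla F(x)}{\dot x}-\alpha\dot W-\norme{\nabla F(x)}^2$. Collecting terms and using the definition \eqref{eq:genericR} to recognize that the coefficient of $\dot W$ is precisely $-R$ (since $\alpha+\delta-a=R$), I obtain
\[
\dot V = -R\dot W + \psc{\tfrac{d}{dt}\nabla F(x)}{\dot x} - \norme{\nabla F(x)}^2 - \alpha\delta\norme{\dot x}^2 .
\]
Adding $RV = aRW + R\dot W + \tfrac{\delta R}{2}\norme{\dot x}^2$ cancels the $\dot W$ contributions and leaves
\[
\dot V + RV = \psc{\tfrac{d}{dt}\nabla F(x)}{\dot x} - \norme{\nabla F(x)}^2 + aRW + \Bigl(\tfrac{\delta R}{2} - \alpha\delta\Bigr)\norme{\dot x}^2 .
\]

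It then remains to bound the right-hand side using the three hypotheses. The $L$-Lipschitz continuity of $\nabla F$ gives $\psc{\frac{d}{dt}\nabla F(x)}{\dot x}\le L\norme{\dot x}^2$; the Polyak–Łojasiewicz inequality \eqref{KL} together with $W\ge 0$ and \eqref{H1} gives $aRW\le 2\mu W\le \norme{\nabla F(x)}^2$, so the two gradient-norm terms cancel. What survives is $\dot V+RV\le\bigl(L-\alpha\delta+\tfrac{\delta}{2}R\bigr)\norme{\dot x}^2$, which is $\le 0$ exactly by \eqref{H2}. The rest (integration) is routine.

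The main obstacle is the regularity justification, not the algebra: since $F$ is only assumed to have a Lipschitz (not necessarily differentiable) gradient, the quantity $\ddot W$, i.e. the derivative of $t\mapsto\psc{\nabla F(x(t))}{\dot x(t)}$, must be treated with care. I would argue that $t\mapsto\nabla F(x(t))$ is locally Lipschitz in $t$, being the composition of the Lipschitz map $\nabla F$ with the $C^1$ (hence locally Lipschitz) curve $x$; it is therefore differentiable for a.e. $t$, with $\norme{\frac{d}{dt}\nabla F(x(t))}\le L\norme{\dot x(t)}$, which is precisely what yields $\psc{\frac{d}{dt}\nabla F(x)}{\dot x}\le L\norme{\dot x}^2$ via Cauchy–Schwarz. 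Consequently $V$ is locally absolutely continuous and the differential inequality holds almost everywhere, which suffices for the integrating-factor argument to deliver \eqref{eq:expV}.
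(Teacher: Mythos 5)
Your proposal is correct and takes essentially the same route as the paper: differentiate $V$ along the trajectory, substitute $\ddot{x}=-\alpha\dot{x}-\nabla F(x)$ so the coefficient of $\dot{W}$ becomes $-R$, bound $\bigl\langle\tfrac{d}{dt}\nabla F(x(t)),\dot{x}(t)\bigr\rangle$ by $L\norme{\dot{x}(t)}^{2}$ via Lipschitz continuity, absorb $aRW-\norme{\nabla F(x)}^{2}$ using \eqref{KL} with \eqref{H1} and kill the velocity term with \eqref{H2}, then integrate $\dot{V}\leq -RV$ from $V(0)=a(F(x_0)-F_\ast)$. Your only departure is cosmetic (adding $RV$ directly rather than substituting $\dot{W}=V-aW-\tfrac{\delta}{2}\norme{\dot{x}}^{2}$), and your explicit a.e.-differentiability justification spells out what the paper delegates to a cited remark.
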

\begin{proof}
By a direct computation of the derivative of \(V(t)\) we get
\begin{equation}\label{prooflyap1}
    \dot{V}(t) = a\dot{W}(t) + \left\langle\frac{d}{dt}\nabla F(x(t)), \dot{x}(t)\right\rangle + \langle\nabla F(x(t)), \ddot{x}(t)\rangle + \delta\langle\dot{x}(t),\ddot{x}(t)\rangle
\end{equation}
Since \(\nabla F\) is $L$-Lipschitz continuous and $\dot{x}(\cdot)$ is continuous, it follows that $\nabla F(x(t))$ is absolutely continuous (see Remark $1$ in \cite{boct2018approaching}) and for almost every $t>0$, it holds:
\begin{equation}
	\left\langle\frac{d}{dt}\nabla F(x(t)), \dot{x}(t)\right\rangle \leq L\norme{\dot{x}(t)}^2
\end{equation}
Thus, from \eqref{prooflyap1}, we obtain:
\begin{equation}\label{prooflyap2}
    \dot{V}(t) \leq a\dot{W}(t) + L\norme{\dot{x}(t)}^2 + \langle\nabla F(x(t)), \ddot{x}(t)\rangle + \delta\langle\dot{x}(t),\ddot{x}(t)\rangle \quad  \text{ for a.e. } t\geq0
\end{equation}
Next, by using the basic equation \eqref{DSGD} for the solution $x(t)$ and expressing $\ddot{x}(t)=-\alpha\dot{x}(t)-\nabla F(x(t))$ in \eqref{prooflyap2}, we find:
\begin{equation}
\begin{aligned}
    \dot{V}(t) &\leq a\dot{W}(t) -\big(\alpha+\delta\big)\langle\nabla F(x(t)), \dot{x}(t)\rangle -\norme{\nabla F(x(t))}^2 + \big(L-\alpha\delta\big)\norme{\dot{x}(t)}^2 \quad \text{ for a.e. } t\geq0
\end{aligned}
\end{equation}
Since $\dot{W}(t)=\langle\nabla F(x(t)), \dot{x}(t)\rangle$, we have:
\begin{equation}
    \dot{V}(t) \leq  \big(a-\alpha-\delta\big)\dot{W}(t) -\norme{\nabla F(x(t))}^2 + \big(L-\alpha\delta\big)\norme{\dot{x}(t)}^2 \quad \text{ for a.e. } t\geq0
\end{equation}
and therefore, letting \(R=\alpha+\delta-a\) as in Equation~\eqref{eq:genericR}, we obtain
\begin{equation}\label{lyapunovcalculus1}
    \dot{V}(t) \leq -R\dot{W}(t) - \norme{\nabla F(x(t))}^2 + \big(L - \alpha\delta\big)\norme{\dot{x}(t)}^2 \quad \text{ for a.e. } t\geq0
\end{equation}
By definition of $V(t)$ \eqref{Vdefinition}, we have 
\begin{equation}\label{prooflyap3}
 \dot{W}(t)=V(t)-aW(t)-\frac{\delta}{2}\norme{\dot{x}(t)}^2
\end{equation}
hence by injecting \eqref{prooflyap3} into \eqref{lyapunovcalculus1}, we find:
\begin{equation}
	\dot{V}(t) \leq -RV(t) + aRW(t) - \norme{\nabla F(x(t))}^2 + \left(L - \alpha\delta + \frac{\delta}{2}R\right)\norme{\dot{x}(t)}^2 \quad \text{ for a.e. } t\geq0
\end{equation}
Since \(F\) satisfies condition \eqref{KL} (i.e. $2\mu W(t) \leq \norme{\nabla F((t))}^{2} $), we obtain
\begin{equation}
	\dot{V}(t) \leq -RV(t) + \left(aR - 2\mu\right)W(t) + \left(L -\alpha\delta + \frac{\delta}{2}R\right)\norme{\dot{x}(t)}^2 \quad \text{ for a.e. } t\geq0
\end{equation}
Conditions~\eqref{H1} and~\eqref{H2} yield:
\begin{equation}
	\dot{V}(t) \leq -RV(t) \quad \text{ for a.e. } t\geq0
\end{equation}
By direct integration we get
\begin{equation}\label{Vcontinuous}
	V(t) \leq V(0)e^{-Rt} \quad \text{ for a.e. } t\geq0
\end{equation}
and since $V$ is continuous, \eqref{Vcontinuous} holds true for all $t\geq0$.
Finally, the initial conditions \(x(0)=x_0\) and \(\dot{x}(0)=0\) in~\eqref{DSGD} imply \(V(0)=a(F(x_0)-F_{\ast})\).
\end{proof}

The next Lemma provides some sufficient conditions that ensure the validity of assumptions of Lemma \ref{lemma:expV} (i.e. conditions \eqref{HR}, \eqref{H1} and \eqref{H2}). 

\begin{lemma}\label{lemma:Hconditions}
Let $\delta>0$ and consider the following conditions:
\begin{equation}\label{eq:alpha-interval}
    \alpha\in\left(\frac{L}{\delta},\alpha_{-}\right]\cup\left[\alpha_{+},\delta+\frac{2L}{\delta}\right)
\end{equation}
where
\begin{equation}\label{eq:alphapm}
    \alpha_\pm = \frac12\left(\delta+\frac{3L}{\delta}\pm\sqrt{\left(\delta+\frac{L}{\delta}\right)^2-4\mu}\right)
\end{equation}
and
\begin{equation}\label{conditiona}
a = \delta +\frac{2L}{\delta} - \alpha
\end{equation}
Then the conditions \eqref{HR},\eqref{H1} and \eqref{H2} of Lemma \ref{lemma:expV} are satisfied, with
\begin{equation}
    R = 2\left(\alpha-\frac{L}{\delta}\right) 
\end{equation}
\end{lemma}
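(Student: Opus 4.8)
The plan is to verify the three hypotheses \eqref{HR}, \eqref{H1} and \eqref{H2} of Lemma~\ref{lemma:expV} one at a time, simply by inserting the prescribed value $a=\delta+\frac{2L}{\delta}-\alpha$ from~\eqref{conditiona}. The natural first step is to evaluate $R$: from its definition~\eqref{eq:genericR},
\[
R=\alpha-a+\delta=\alpha-\Bigl(\delta+\tfrac{2L}{\delta}-\alpha\Bigr)+\delta=2\Bigl(\alpha-\tfrac{L}{\delta}\Bigr),
\]
which is exactly the value of $R$ announced in the statement, and every subsequent check will use this closed form.

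With $R=2(\alpha-\frac{L}{\delta})$ at hand, I expect conditions~\eqref{HR} and~\eqref{H2} to be essentially free. Condition~\eqref{HR} is $R>0$, i.e. $\alpha>\frac{L}{\delta}$, which holds because both components of the admissible set~\eqref{eq:alpha-interval} lie to the right of $\frac{L}{\delta}$ (the first interval by its open left endpoint, and the second because $\alpha_+>\frac{L}{\delta}$). For~\eqref{H2} I would substitute $R$ directly: $\frac{\delta}{2}R=\alpha\delta-L$, whence $L-\alpha\delta+\frac{\delta}{2}R=0$. Thus~\eqref{H2} is satisfied \emph{with equality}; this is no accident, since the choice~\eqref{conditiona} of $a$ is precisely the one obtained by solving $L-\alpha\delta+\frac{\delta}{2}R=0$ for $a$.

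The only genuinely computational step is~\eqref{H1}, namely $aR\le 2\mu$. Setting $\beta\coloneqq\frac{L}{\delta}$ and plugging in $a=\delta+2\beta-\alpha$ and $R=2(\alpha-\beta)$, I would expand the product and rearrange to see that $aR\le 2\mu$ is equivalent to the quadratic inequality
\[
q(\alpha)\coloneqq\alpha^2-(\delta+3\beta)\alpha+(\delta\beta+2\beta^2+\mu)\ge 0.
\]
The key point to record is that the discriminant of $q$ simplifies to $(\delta+3\beta)^2-4(\delta\beta+2\beta^2+\mu)=(\delta+\beta)^2-4\mu=\bigl(\delta+\frac{L}{\delta}\bigr)^2-4\mu$, which is exactly the radicand in~\eqref{eq:alphapm}; hence the roots of $q$ are precisely $\alpha_-$ and $\alpha_+$. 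Since $q$ is monic, $q(\alpha)\ge 0$ holds exactly on $(-\infty,\alpha_-]\cup[\alpha_+,+\infty)$, and the admissible set~\eqref{eq:alpha-interval} is contained in this region. This yields~\eqref{H1} and finishes the argument.

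I expect the main obstacle to be purely the bookkeeping in~\eqref{H1}: expanding $aR$ without error and recognizing that its associated discriminant collapses to $\bigl(\delta+\frac{L}{\delta}\bigr)^2-4\mu$. A subsidiary remark I would include is that $\alpha_\pm$ are real and that both intervals are nonempty: by the arithmetic--geometric mean inequality $\delta+\frac{L}{\delta}\ge 2\sqrt{L}$, and since any function with $L$-Lipschitz gradient satisfying~\eqref{KL} has $\mu\le L$, one gets $\bigl(\delta+\frac{L}{\delta}\bigr)^2\ge 4L\ge 4\mu$, so the square root in~\eqref{eq:alphapm} is well defined; one then checks $\frac{L}{\delta}<\alpha_-$ and $\alpha_+<\delta+\frac{2L}{\delta}$, which guarantees the admissible set is nonempty.
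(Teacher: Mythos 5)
Your proof is correct and follows essentially the same route as the paper's: you compute $R=2\left(\alpha-\frac{L}{\delta}\right)$, note that \eqref{HR} holds since the feasible set \eqref{eq:alpha-interval} lies to the right of $\frac{L}{\delta}$, observe that \eqref{H2} is satisfied with equality by the design of $a$, and reduce \eqref{H1} to the monic quadratic $\alpha^2-\left(\delta+\frac{3L}{\delta}\right)\alpha+\frac{2L^2}{\delta^2}+L+\mu\geq 0$, whose roots are exactly $\alpha_\pm$, so that the feasible set is contained in the region where it holds. Your closing observation that $\mu\leq L$ (hence $\left(\delta+\frac{L}{\delta}\right)^2\geq 4L\geq 4\mu$ by AM--GM) makes the radicand in \eqref{eq:alphapm} nonnegative is a small worthwhile addition that the paper asserts without justification, but it does not alter the argument.
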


\begin{proof}

Let $\delta>0$ and $a=\delta+\frac{2L}{\delta}-\alpha$ in the definition of $V$ \eqref{Vdefinition}. 

Since $\alpha<\delta+\frac{2L}{\delta}$ we have $a>0$. By definition of $R$ in \eqref{eq:genericR} and $a=\delta+\frac{2L}{\delta}-\alpha$, we have $R=\alpha+\delta-a=2\left(\alpha-\frac{L}{\delta}\right)$ and since $\alpha>\frac{L}{\delta}$, it follows that $R>0$ and condition \eqref{HR} is satisfied. In addition since $R=2\left(\alpha-\frac{L}{\delta}\right)$,  condition \eqref{H2} holds true as equality.

Finally since $a=\delta+\frac{2L}{\delta}-\alpha$ and $R=2\left(\alpha-\frac{L}{\delta}\right)$ condition \eqref{H1} is equivalent to:
\begin{equation}\label{alphasecond}
    \alpha^2 - \left(\delta+\frac{3L}{\delta}\right)\alpha + 2\frac{L^2}{\delta^2} + L + \mu \geq 0
\end{equation}
The associated second order equation admits two real valued roots as defined in Equation~\eqref{eq:alphapm} for every \(\delta>0\), therefore the inequality \eqref{alphasecond} holds true if and only if \(\alpha\in[0,\alpha_-]\cup[\alpha_+,+\infty)\). Since the feasible set for $\alpha$ defined in Equation~\eqref{eq:alpha-interval} is contained in \(\alpha\in[0,\alpha_-]\cup[\alpha_+,+\infty)\), it follows that condition \eqref{H1} is satisfied and concludes the proof of Lemma \ref{lemma:Hconditions}.
\end{proof}

The next Lemma shows how we can take advantage of the linear rates for $V$, stated in Lemma \ref{lemma:expV}, to deduce linear rates for the objective function values $F(x(t))-F_{\ast}$.

\begin{lemma}\label{lemmaexponentialforW}
Let $F:\Hb\to\R$ satisfying $\mathcal{A}.1$, $\mathcal{A}.2$ and condition \eqref{KL} and \(V\) as defined in Equation~\eqref{Vdefinition}. Assume that conditions \eqref{HR}, \eqref{H1} and \eqref{H2} of Lemma~\ref{lemma:expV} hold true.
Then, for all $t>0$, we have:
\begin{equation}
F(x(t))-F_{\ast}\leq C(t)e^{-mt}
\end{equation}
where \(m =\min\{a,R\}\) and
\begin{equation}\label{eq:C}
	C(t) = \begin{dcases}
		\big(F(x(0))-F_{\ast}\big)\left(1+\frac{a}{\abs{a-R}}\right) &\text{if } a\neq R \\
		\big(F(x(0))-F_{\ast}\big)\left(1+at\right) 					&\text{if } a= R
	\end{dcases}
\end{equation}
\end{lemma}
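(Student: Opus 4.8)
The plan is to collapse the second-order information carried by $V$ into a first-order scalar differential inequality for $W(t)=F(x(t))-F_\ast$ alone, and then integrate that inequality explicitly. The starting point is the definition \eqref{Vdefinition}, $V(t)=aW(t)+\dot W(t)+\frac{\delta}{2}\norme{\dot x(t)}^2$. Since the kinetic term is nonnegative (here $\delta\geq0$, as in the regime of Lemma \ref{lemma:Hconditions}), discarding it gives $\dot W(t)+aW(t)\leq V(t)$. Feeding in the exponential decay of $V$ from Lemma \ref{lemma:expV}, together with $V(0)=aW(0)$ (which follows from $\dot x(0)=0$ and hence $\dot W(0)=0$), yields the clean inequality
\begin{equation}
\dot W(t)+aW(t)\leq aW(0)e^{-Rt}.
\end{equation}

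First I would integrate this with the integrating factor $e^{at}$, so that the left-hand side becomes $\frac{d}{dt}\bigl(e^{at}W(t)\bigr)$, and integrate from $0$ to $t$ to obtain
\begin{equation}
e^{at}W(t)-W(0)\leq aW(0)\int_0^t e^{(a-R)s}\dd s.
\end{equation}
The integral splits according to whether $a=R$. When $a=R$ it is simply $t$, giving directly $W(t)\leq W(0)(1+at)e^{-at}$, which is exactly the claim with $m=a=R$ and the second branch of $C(t)$. When $a\neq R$ the integral equals $\frac{e^{(a-R)t}-1}{a-R}$, so after rearranging and multiplying by $e^{-at}$,
\begin{equation}
W(t)\leq W(0)e^{-at}+\frac{aW(0)}{a-R}\bigl(e^{-Rt}-e^{-at}\bigr).
\end{equation}

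It then remains to massage this $a\neq R$ expression into the form $C(t)e^{-mt}$ with $m=\min\{a,R\}$, and this case bookkeeping is the only place where care is needed. If $a>R$, so $m=R$, I would use $e^{-at}\leq e^{-Rt}$ and $e^{-Rt}-e^{-at}\leq e^{-Rt}$ (both nonnegative, and $\frac{a}{a-R}>0$) to collect everything into $W(0)\bigl(1+\frac{a}{a-R}\bigr)e^{-Rt}$. If instead $a<R$, so $m=a$, I would rewrite the second term as $\frac{a}{R-a}\bigl(e^{-at}-e^{-Rt}\bigr)$ with $R-a>0$, bound $e^{-at}-e^{-Rt}\leq e^{-at}$, and collect into $W(0)\bigl(1+\frac{a}{R-a}\bigr)e^{-at}$. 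In both subcases the coefficient is $1+\frac{a}{\abs{a-R}}$, matching the first branch of $C(t)$. The main obstacle is nothing conceptual but rather keeping the signs of $a-R$ and of $e^{-Rt}-e^{-at}$ aligned, so that each term discarded in the bounding step genuinely has the favorable sign; the positivity $a>0$ (guaranteed by the feasibility conditions, e.g. $a=\delta+\frac{2L}{\delta}-\alpha>0$ in Lemma \ref{lemma:Hconditions}) is what makes $\frac{a}{\abs{a-R}}$ nonnegative and the bound meaningful.
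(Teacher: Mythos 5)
Your proof is correct and essentially the same as the paper's: the paper likewise drops the nonnegative kinetic term to get $\dot W(t)\leq -aW(t)+aW(0)e^{-Rt}$ from Lemma~\ref{lemma:expV}, then invokes Gr\"onwall's Lemma~\ref{gronwallcontinuous} with $s=0$, which is precisely your integrating-factor computation, and concludes with the same split between $a=R$ and $a\neq R$. Your explicit sign bookkeeping in the subcases $a>R$ and $a<R$, and your flagging of the implicit hypotheses $a>0$ and $\delta\geq 0$, merely spell out what the paper compresses into ``neglecting the non-positive terms.''
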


\begin{proof}
By Lemma~\ref{lemma:expV} and Equation~\eqref{Vdefinition} we immediately get
\begin{equation}
\dot{W}(t) \leq -a W(t) +a W(0)e^{-Rt} -\frac{\delta}{2}\norme{\dot{x}(t)}^{2}
\end{equation}
Neglecting the non-negative term $\frac{\delta}{2}\norme{\dot{x}(t)}^{2}$, we obtain:
\begin{equation}
\dot{W}(t)\leq -aW(t) + a W(0)e^{-Rt}
\end{equation}
Hence, by applying Lemma \ref{gronwallcontinuous} (with $s=0$), we have:
\begin{equation}\label{lastinequality}
W(t)\leq W(0)e^{-at} + a W(0)e^{-at}\int_{0}^{t}e^{(a-R)s}ds
\end{equation}
By computing the integral in the last inequality \eqref{lastinequality} and neglecting the non-positive terms in the case $a\neq R$, we find:
\begin{equation}
W(t)\leq W(0)\left(e^{-at} +\frac{a}{\abs{a-R}}e^{-Rt}\right) \leq W(0)\left(1+\frac{a}{\abs{a-R}} \right)e^{-mt}
\end{equation}
with $m=\min\{a,R\}$.
On the other hand, if $a=R$, by computing the integral in \eqref{lastinequality}, we have:
\begin{equation}
W(t)\leq W(0)\big(1+at \big)e^{-mt}
\end{equation}
which concludes the proof of Lemma~\ref{lemmaexponentialforW}.
\end{proof}

\subsection{The non-convex setting}\label{subsection non-convex}

In the following Theorem we give a more explicit expression of the convergence rate of the objective function $F(x(t))-F_{\ast}$, depending on the damping parameter $\alpha$ and the auxiliary variable $\delta$. 

\begin{teo}\label{basicteorates}
Let $F:\Hb\to\R$ satisfying $\mathcal{A}.1$, $\mathcal{A}.2$ and condition \eqref{KL} with $\mu>0$ and denote $\kappa=\frac{L}{\mu}$. Let $\delta>0$ and $\big(x(t)\big)_{t\geq0}$ be the solution-trajectory associated to the dynamical system \eqref{DSGD}. Then for all \(t\geq 0\) the following bound holds
\begin{equation}\label{basicteoratesrelation}
F(x(t))-F_{\ast} \leq \big(F(x(0)-F_{\ast}\big)\bigg(1 + \frac{\delta+2\frac{L}{\delta}-\alpha}{\left|\delta+4\frac{L}{\delta}-3\alpha\right|}\bigg)e^{-mt} 
\end{equation}
with $m=\min\left\{\delta+2\frac{L}{\delta}-\alpha,2\big(\alpha-\frac{L}{\delta}\big)\right\}$, $\alpha\in \left(\frac{L}{\delta},\alpha_{-}\right]\cup\left[\alpha_{+},\delta+\frac{2L}{\delta}\right)$ where $\alpha_{\pm}=\frac{1}{2}\bigg(\delta+\frac{3L}{\delta}\pm\sqrt{\bigg(\delta+\frac{L}{\delta}\bigg)^2-4\mu}\bigg)$.

In particular we have the following cases:
\begin{itemize}
    \item \(m=2\left(\alpha-\frac{L}{\delta}\right)\) in the following cases:
    \begin{equation}\label{conditions11}
        \begin{dcases}
        \alpha\in\left(\frac{L}{\delta},\frac13\left(\delta+\frac{4L}{\delta}\right)\right) ~ \text{ if } ~ \kappa\leq \frac98\text{ and } \delta\in(\delta_-,\delta_+)   \\ \\
        \alpha\in\left(\frac{L}{\delta},\alpha_-\right] \\ \quad  \text{ if } ~ \biggl(\kappa\leq\frac98 \text{ and } \delta\in(0,\delta_-)\cup(\delta_+,+\infty)\biggr) \text{ OR } \biggl(\kappa\geq\frac98 \text{ and } \delta>0\biggr)
        \end{dcases}
    \end{equation}
    \item \(m=\delta+2\frac{L}{\delta}-\alpha\) in the following cases:
    \begin{equation}\label{conditions12}
    \begin{dcases}
         \alpha \in \left(\frac13\left(\frac{4L}{\delta}+\delta\right), \alpha_-\right]\cup\bigl[\alpha_+,\delta+\frac{2L}{\delta}\bigr) ~ \text{ if } ~ \kappa\leq \frac98\text{ and } \delta\in(\delta_-,\delta_+)\  \\
         \\
        \alpha \in [\alpha_+,\delta+\frac{2L}{\delta})  \\ \quad \text{ if } ~ \biggl(\kappa\leq\frac98 \text{ and } \delta\in(0,\delta_-)\cup(\delta_+,+\infty)\biggr) \text{ or } \biggl(\kappa\geq\frac98 \text{ and } \delta>0\biggr) 
    \end{dcases}
    \end{equation}
\end{itemize}
where \(\delta_\pm = \frac{1}{2\sqrt{2}}\left(3\sqrt{\mu}\pm\sqrt{9\mu-8L}\right)\).
\end{teo}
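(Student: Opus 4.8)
The bound \eqref{basicteoratesrelation} itself is just the composition of the two preceding lemmas, so I would dispatch it first. Set $a=\delta+\frac{2L}{\delta}-\alpha$ as prescribed. By Lemma~\ref{lemma:Hconditions}, for every $\alpha$ in the interval $\left(\frac{L}{\delta},\alpha_-\right]\cup\left[\alpha_+,\delta+\frac{2L}{\delta}\right)$ the hypotheses \eqref{HR}, \eqref{H1}, \eqref{H2} of Lemma~\ref{lemma:expV} hold with $R=2\left(\alpha-\frac{L}{\delta}\right)$. I would then feed this directly into Lemma~\ref{lemmaexponentialforW}, which yields $F(x(t))-F_\ast\leq C(t)e^{-mt}$ with $m=\min\{a,R\}$. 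The only thing left is to substitute the explicit values of $a$ and $R$: a one-line computation gives $a-R=\delta+\frac{4L}{\delta}-3\alpha$, hence $\lvert a-R\rvert=\left\lvert\delta+\frac{4L}{\delta}-3\alpha\right\rvert$, and the constant $1+\frac{a}{\lvert a-R\rvert}$ becomes exactly the prefactor displayed in \eqref{basicteoratesrelation}. The degenerate value $\alpha=\frac13\left(\delta+\frac{4L}{\delta}\right)$, where $a=R$, is excluded from both case lists precisely because it corresponds to the $1+at$ branch of $C(t)$ in Lemma~\ref{lemmaexponentialforW}.

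All the remaining content is the bookkeeping that identifies which of $a$ or $R$ realizes the minimum. Note $m=R$ if and only if $R\leq a$, i.e. $2\left(\alpha-\frac{L}{\delta}\right)\leq\delta+\frac{2L}{\delta}-\alpha$, i.e. $\alpha\leq\alpha_{0}:=\frac13\left(\delta+\frac{4L}{\delta}\right)$; otherwise $m=a$. So the whole problem reduces to locating the single point $\alpha_0$ relative to the feasible set. Two elementary inequalities show $\alpha_0\in\left(\frac{L}{\delta},\delta+\frac{2L}{\delta}\right)$ always. The decisive computation is to compare $\alpha_0$ with the midpoint $\frac12\left(\delta+\frac{3L}{\delta}\right)$ of $[\alpha_-,\alpha_+]$: one finds
\begin{equation}
\alpha_0-\tfrac12\left(\delta+\tfrac{3L}{\delta}\right)=-\tfrac16\left(\delta+\tfrac{L}{\delta}\right)<0,
\end{equation}
so $\alpha_0<\alpha_+$ for every $\delta>0$. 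Consequently, on the right interval $\alpha\geq\alpha_+>\alpha_0$, forcing $m=a$ there unconditionally; this already produces the $[\alpha_+,\delta+\frac{2L}{\delta})$ branches of both \eqref{conditions11} and \eqref{conditions12}.

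The genuine dichotomy is the sign of $\alpha_0-\alpha_-$. Writing $s=\delta+\frac{L}{\delta}$, the same computation gives $\alpha_0-\alpha_-=-\frac{s}{6}+\frac12\sqrt{s^2-4\mu}$ (the square root is real since $s\geq 2\sqrt{L}\geq 2\sqrt{\mu}$), which is nonnegative exactly when $s^2\geq\frac{9\mu}{2}$. Multiplying the strict reverse inequality $\delta+\frac{L}{\delta}<\frac{3\sqrt{\mu}}{\sqrt2}$ through by $\delta>0$ turns it into the quadratic condition $\delta^2-\frac{3\sqrt{\mu}}{\sqrt2}\,\delta+L<0$, whose roots are precisely $\delta_\pm=\frac{1}{2\sqrt2}\left(3\sqrt{\mu}\pm\sqrt{9\mu-8L}\right)$, real if and only if $9\mu\geq 8L$, i.e. $\kappa\leq\frac98$. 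Thus when $\kappa\leq\frac98$ and $\delta\in(\delta_-,\delta_+)$ the quadratic is negative, so $\alpha_0<\alpha_-$ lies in the interior of the left interval, splitting it into $\alpha<\alpha_0$ (giving $m=R$) and $\alpha>\alpha_0$ (giving $m=a$); this is the first line of each of \eqref{conditions11} and \eqref{conditions12}. In the complementary regime — $\kappa\leq\frac98$ with $\delta\notin(\delta_-,\delta_+)$, or $\kappa\geq\frac98$ with the quadratic having no real root and hence staying nonnegative — one has $\alpha_0\geq\alpha_-$, so the entire left interval $\left(\frac{L}{\delta},\alpha_-\right]$ sits at or below $\alpha_0$ and yields $m=R$, matching the second lines.

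The main obstacle is purely the algebraic bookkeeping of these three comparisons; the clean trick that makes it tractable is to express every comparison in terms of the single scalar $s=\delta+\frac{L}{\delta}$, so that $\alpha_0-\alpha_-$ and $\alpha_0-\alpha_+$ each collapse to a one-variable inequality, and then to recognize $\delta_\pm$ as the roots of the quadratic in $\delta$ that this inequality induces. Once that reduction is made, the threshold $\kappa=\frac98$ emerges automatically as the sign change of the discriminant $9\mu-8L$, and nothing beyond routine verification remains.
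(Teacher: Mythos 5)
Your proof is correct and follows essentially the same route as the paper's: combine Lemmas~\ref{lemma:Hconditions}, \ref{lemma:expV} and \ref{lemmaexponentialforW} with $a=\delta+\frac{2L}{\delta}-\alpha$ and $R=2\left(\alpha-\frac{L}{\delta}\right)$ to get the bound and prefactor, then locate $\alpha_0=\frac13\left(\delta+\frac{4L}{\delta}\right)$ relative to $\alpha_\pm$, which after squaring reduces to the quadratic $\delta^2-\frac{3}{\sqrt{2}}\sqrt{\mu}\,\delta+L<0$ with roots $\delta_\pm$ and the threshold $\kappa=\frac98$ arising from its discriminant. Your midpoint computation showing $\alpha_0<\alpha_+$ unconditionally is a minor stylistic shortcut for the inequality the paper verifies by direct manipulation; everything else matches the paper's argument step for step.
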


\begin{proof} 

Let $\delta>0$, $\alpha\in \left(\frac{L}{\delta},\alpha_{-}\right]\cup\left[\alpha_{+},\delta+\frac{2L}{\delta}\right)$ and set $a=\delta+\frac{2L}{\delta}-\alpha$ in the definition of $V$ \eqref{Vdefinition}, so that Lemmas~\ref{lemma:expV}, and \ref{lemma:Hconditions} hold true with $R=2\left(\alpha-\frac{L}{\delta}\right)$.

By using Lemma~\ref{lemmaexponentialforW} it follows that for all $t>0$, it holds:
\begin{equation}
F(x(t))-F_{\ast}\leq C(t)e^{-mt}
\end{equation}
with \(m =\min\left\{a,R\right\}\), with $a=\delta+\frac{2L}{\delta}-\alpha$ and $R=2\big(\alpha-\frac{L}{\delta}\big)$.
In addition if we exclude the case $a=R$ (i.e. $\alpha=\frac{\delta+\frac{4L}{\delta}}{3}$),  Lemma~\ref{lemmaexponentialforW}, implies:
\[
    C(t) = 1 + \frac{a}{\left|a-R\right|}
\]
By substituting \(a=\delta+2\frac{L}{\delta}-\alpha\) and \(R=2\left(\alpha-\frac{L}{\delta}\right)\), we obtain
\[
    C = 1 + \frac{\delta+2\frac{L}{\delta}-\alpha}{\left|\delta+4\frac{L}{\delta}-3\alpha\right|}
\]

To make more explicit the rates of the convergence of the objective function we need to study the value of \(m\), as a function of $\alpha$ and $\delta$.

Recalling that \(\alpha\in\left(\frac{L}{\delta},\alpha_-\right]\cup\left[\alpha_+,\delta+\frac{2L}{\delta}\right)\) where
\[
    \alpha_\pm = \frac12\left(\frac{3L}{\delta}+\delta \pm \sqrt{\left(\frac{L}{\delta} + \delta\right)^2-4\mu}\right)
\]
we consider the following two cases:
\begin{itemize}
    \item \(m = 2\left(\alpha-\frac{L}{\delta}\right)\) 
    
    This case is equivalent to the condition 
    \[
        \delta+2\frac{L}{\delta}-\alpha > 2\left(\alpha-\frac{L}{\delta}\right)
    \]
which can be rewritten as 
    \[
        \alpha < \frac13\left(\delta+\frac{4L}{\delta}\right)
    \]
    First we note that we always have \(\frac13\left(\delta+\frac{4L}{\delta}\right)\leq \alpha_+\).
    Indeed, by a direct computation
    \begin{align*}
        \frac13\left(\delta+\frac{4L}{\delta}\right) &\leq \frac12\left(\frac{3L}{\delta}+\delta + \sqrt{\left(\frac{L}{\delta} + \delta\right)^2-4\mu}\right) \\
        -\frac13\left(\frac{L}{\delta}+\delta\right) &\leq \sqrt{\left(\frac{L}{\delta} + \delta\right)^2-4\mu}
    \end{align*}
    which is true for every \(L\), \(\mu\) and \(\delta\).
    
Therefore we have \(\alpha\in\left(\frac{L}{\delta},\min\{\alpha_-, \frac13\left(\delta+\frac{4L}{\delta}\right)\}\right)\).
    We start ordering the values in the minimum by studying the case \(\frac13\left(\delta+\frac{4L}{\delta}\right)< \alpha_-\) (note that we exclude the equality since we assumed that \(\alpha\) cannot take the value \(\frac13\left(\delta+\frac{4L}{\delta}\right)\))  which is equivalent to
    \begin{align*}
        \frac13\left(\delta+\frac{4L}{\delta}\right)    &< \frac12\left(\frac{3L}{\delta}+\delta - \sqrt{\left(\frac{L}{\delta} + \delta\right)^2-4\mu}\right) \\
        \frac13\left(\frac{L}{\delta}+\delta\right)     &> \sqrt{\left(\frac{L}{\delta} + \delta\right)^2-4\mu}
    \end{align*}
    By taking the square we obtain
    \begin{equation} \label{eq:delta_roots}
        \left(\frac{L}{\delta} + \delta\right)^2 < \frac92\mu
    \end{equation}
    or equivalently
    \[
        \delta^2 - \frac{3}{\sqrt{2}}\sqrt{\mu}\delta + L < 0
    \]
    which is satisfied if \(\kappa=\frac{L}{\mu}\leq\frac98\) and \(\delta\in(\delta_-,\delta_+)\), where
    \[
        \delta_\pm = \frac{1}{2\sqrt{2}}\left(3\sqrt{\mu}\pm\sqrt{9\mu-8L}\right)
    \]
    Analogously, we find that \(\frac13\left(\delta+\frac{4L}{\delta}\right) > \alpha_-\) can be reduced to
    \[
        \delta^2 - \frac{3}{\sqrt{2}}\sqrt{\mu}\delta + L > 0
    \]
    which holds true if  \(\kappa=\frac{L}{\mu}\leq\frac98\) and \(\delta\in(0,\delta_-)\cup(\delta_+,+\infty)\)   or if \(\kappa\geq\frac98\) and \(\delta>0\)  .
    \item \(m = \delta+2\frac{L}{\delta}-\alpha\) 
    
    This case can be studied analogously to the previous one by noticing that this value of \(m\) is achieved whenever
    \[
        \delta+2\frac{L}{\delta}-\alpha > 2\left(\alpha-\frac{L}{\delta}\right)
    \]
    which is equivalent to
    \[
        \alpha > \frac13\left(\delta+\frac{4L}{\delta}\right)
    \]
    together with \(\alpha\in\left(\frac{L}{\delta},\alpha_-\right]\cup[\alpha_+,\delta+\frac{2L}{\delta})\).
   The statement follows from the relation between \(\frac13\left(\delta+\frac{4L}{\delta}\right)\) and \(\alpha_\pm\) as already studied in the previous point.
    \end{itemize}
\end{proof}

We are now ready to give the proof of Theorem~\ref{coroptimalnon}.

\begin{proof}[\textbf{Proof of Theorem \ref{coroptimalnon}}]

To prove the statement of Theorem we optimize (maximize) the rate $m=\min\left\{\delta+2\frac{L}{\delta}-\alpha,2\big(\alpha-\frac{L}{\delta}\big)\right\}$ over the feasible values for \(\alpha\) and \(\delta\), defined in Theorem~\ref{basicteorates}. In particular, by \eqref{conditions11} and \eqref{conditions12}, we consider two cases:
\begin{itemize}
    \item \(m=2\left(\alpha-\frac{L}{\delta}\right)\) 
    
    In this case by maximizing $m$, over $\alpha$ and $\delta$, if $\varepsilon>0$, we have:
 
    \begin{equation}
    \begin{aligned}
        m^* & = 2\sup_{\alpha, \delta}\left(\alpha-\frac{L}{\delta}\right) \\ &=
        \begin{dcases}
            \sup_{\delta\in[\delta_-, \delta_+]}\frac23\left(\frac{L}{\delta}+\delta\right)-\varepsilon    & \text{if}\ \kappa\leq\frac98 ~ \text{ and } ~ \alpha=\frac{\delta+\frac{4L}{\delta}}{3}-\frac{\varepsilon}{2} \\
            \sup_{\delta\in(0,\delta_-]\cup[\delta_+,+\infty)}2\left(\alpha_- - \frac{L}{\delta}\right)-\varepsilon & \text{if}\ \kappa\leq\frac98 ~ \text{ and } ~ \alpha=\alpha_{-}-\frac{\varepsilon}{2} \\
            \sup_{\delta>0}2\left(\alpha_- - \frac{L}{\delta}\right) & \text{if}\ \kappa>\frac98 ~ \text{ and } ~ \alpha=\alpha_{-}
        \end{dcases}
        \end{aligned}
        \end{equation}
 where $\alpha_{-}=\frac{1}{2}\left(\delta+\frac{3L}{\delta}-\sqrt{\left(\delta+\frac{L}{\delta}\right)^2-4\mu}\right)$.
        
For the first case, note that the supremum of \(\frac{L}{\delta}+\delta\) at $[\delta_{-},\delta_{+}]$ is attained at $\delta=\delta_{\pm}$ and is equal to \(m^*=\sqrt{2\mu}-\varepsilon\).
    
For the second case, since \(2\left(\alpha_- - \frac{L}{\delta}\right)-\varepsilon=\delta+\frac{L}{\delta}-\sqrt{\left(\delta+\frac{L}{\delta}\right)^2-4\mu}-\varepsilon\), its maximum value for \(\delta\in(0,\delta_-]\cup[\delta_+,+\infty)\) is achieved for \(\delta=\delta_\pm\) and is equal to \(m^*=\sqrt{2\mu}-\varepsilon\).

Finally, in the third case the maximum for \(2\left(\alpha_- - \frac{L}{\delta}\right)\) for $\delta>0$, is achieved for \(\delta=\sqrt{L}\) and equals to \(m^*=2(\sqrt{L}-\sqrt{L-\mu})\).

Concluding, without loss of generality, we have the following possible rates:
\begin{equation}\label{eq:ma>r}
        m^* = \begin{dcases}
            \sqrt{2\mu}-\varepsilon & \text{ if } ~ \kappa\leq\frac98 ~ \text{ and } ~ \alpha =\frac{(5\pm\sqrt{9-8\kappa})}{2\sqrt{2}}\sqrt{\mu}-\frac{\varepsilon}{2} \\
            2\left(\sqrt{\kappa}-\sqrt{\kappa-1}\right)\sqrt{\mu} & \text{ if } ~ \kappa>\frac98 ~ \text{ and } ~ \alpha =\left(2\sqrt{\kappa}-\sqrt{\kappa-1}\right)\sqrt{\mu} \\
        \end{dcases}
    \end{equation}
    \item \(m = \delta+2\frac{L}{\delta}-\alpha\) 
    
    Analogously, in this case the best possible convergence rate is given by
    \begin{equation}\label{diorthoseis}
    \begin{aligned}
        m^* & = \sup_{\alpha, \delta}\left(\delta +\frac{2L}{\delta}-\alpha\right) \\ & = 
        \begin{dcases}
            \sup_{\delta\in(\delta_-, \delta_+)}\frac23\left[\frac{L}{\delta} + \delta\right] & \text{if }\ \kappa\leq\frac98 ~ \text{ and } ~ \alpha=\frac{\delta+\frac{4L}{\delta}}{3}-\varepsilon \\
            \sup_{\delta\in(0,\delta_-]\cup[\delta_+,+\infty)}\left(\delta +\frac{2L}{\delta}-\alpha_+\right) & \text{if }\ \kappa\leq\frac98 ~ \text{ and } ~ \alpha=\alpha_{+} \\
            \sup_{\delta\in\R}\left(\delta +\frac{2L}{\delta}-\alpha_+\right) & \text{ if }\ \kappa>\frac98 ~ \text{ and } ~ \alpha=\alpha_{+} 
        \end{dcases}
    \end{aligned}
    \end{equation}
  where $\alpha_{+}=\frac{1}{2}\left(\delta+\frac{3L}{\delta}+\sqrt{\left(\delta+\frac{L}{\delta}\right)^2-4\mu}\right)$ and $\varepsilon>0$.   
    
    In the first case of \eqref{diorthoseis} we have the same expression we studied in  the previous section which  gives \(m^*=\sqrt{2\mu}-\varepsilon\).
    For the second, since \(\delta +\frac{2L}{\delta}-\alpha_+=\frac{1}{2}\left(\delta+\frac{L}{\delta}-\sqrt{\left(\delta+\frac{L}{\delta}\right)^2-4\mu}\right)\) its maximum value in \(\sqrt{L}\notin(0,\delta_-]\cup[\delta_+,+\infty)\) is attained for \(\delta=\delta_\pm\)  and is \(m^*=\sqrt{\frac{\mu}{2}}\), while in the third case, the maximal value is achieved for \(\delta=\sqrt{L}\) and equals to \(m^*=\sqrt{L}-\sqrt{L-\mu}\).
Overall, in this case, we have
    \begin{equation}\label{eq:ma<r}
        m^* = \begin{dcases}
            \sqrt{2\mu}-\varepsilon    & \text{ if }\ \kappa\leq\frac98 \\
            \left(\sqrt{\kappa}-\sqrt{\kappa-1}\right)\sqrt{\mu}   & \text{ if }\ \kappa>\frac98 \\
        \end{dcases}
    \end{equation}
\end{itemize}

Comparing \eqref{eq:ma>r} and \eqref{eq:ma<r}, the best value for $m^{\ast}$ is:
\begin{equation}
        m^* = \begin{dcases}
            \sqrt{2\mu}-\varepsilon & \text{ if } ~ \kappa\leq\frac98 ~ \text{ and } ~ \alpha =\frac{(5\pm\sqrt{9-8\kappa})}{2\sqrt{2}}\sqrt{\mu}-\frac\varepsilon2 \\
            2\left(\sqrt{\kappa}-\sqrt{\kappa-1}\right)\sqrt{\mu} & \text{ if } ~ \kappa>\frac98 ~ \text{ and } ~ \alpha =\left(2\sqrt{\kappa}-\sqrt{\kappa-1}\right)\sqrt{\mu} \\
        \end{dcases}
    \end{equation}

Finally by computing the associated constant of relation \eqref{basicteoratesrelation} in Theorem \ref{basicteorates}, for each case, if $\varepsilon>0$, then it holds: 
\begin{equation}
C= \big(F(x(0)-F_{\ast}\big)\begin{dcases}   \frac{2\left(2\varepsilon+\sqrt{2\mu}\right)}{3\varepsilon}   & ~ \text{ if } ~ \kappa\leq\frac98 \\
\frac{4\left(3\kappa-3+\sqrt{\kappa}\sqrt{\kappa-1}\right)}{8\kappa-9} & ~ \text{ if } ~ \kappa>\frac98
\end{dcases}
\end{equation}
which concludes the proof of Theorem \ref{coroptimalnon}.
\end{proof}

\subsection{Convex setting}\label{subsection convex}

In this section we prove Theorem \ref{coroptimal} dealing with the convex setting. In particular we will get some slightly improved rates with respect to the ones found in the previous Theorem \ref{basicteorates}.
This improvement is reflected in the following Theorem.

\begin{teo}\label{basicteoratesconvex}
Let $F$ be a convex function with $L$-Lipschitz gradient, satisfying condition \ref{KL} with $\mu>0$ and $\big((x(t)\big)_{t\geq0}$ the solution-trajectory associated to the dynamical system \eqref{DSGD}.
Let $\delta>0$ and $\alpha_{-}=\frac{1}{2}\bigg(\delta+\frac{3L}{\delta}-\sqrt{\bigg(\delta+\frac{L}{\delta}\bigg)^2-4\mu}\bigg)$
We consider the following cases:
\begin{itemize}
\item If $\mu=L$ and $\delta\leq \sqrt{L}$, then \eqref{ratesObjectiveTeo} holds true for all $\alpha\in \big(\frac{L}{\delta},\alpha_{-}\big)$ 

\item If $\mu<L$ and  $\delta>0$ or if $\mu=L$ and  $\delta>\sqrt{L}$, then \eqref{ratesObjectiveTeo} holds true for all $\alpha\in \big(\frac{L}{\delta},\alpha_{-}\big]$
\end{itemize}
\begin{equation}\label{ratesObjectiveTeo}
\begin{aligned}
\norme{\nabla F(x(t))}^{2} & \leq C_{\delta,\alpha}\big(F(x(0))-F_{\ast}\big)e^{-2\left(\alpha-\frac{L}{\delta}\right)t} \\
\text{and } \qquad F(x(t))-F_{\ast} &\leq \frac{2C_{\delta,\alpha}}{\mu}\big(F(x(0)-F_{\ast}\big)e^{-2\big(\alpha-\frac{L}{\delta}\big)t}
\end{aligned}
\end{equation}
with $C_{\delta,\alpha}=2\bigg(1+\frac{L}{\delta\big(\delta+\frac{L}{\delta}-\alpha\big)}\bigg)$.

\end{teo}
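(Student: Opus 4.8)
The plan is to mimic the structure of Theorem~\ref{basicteorates} but to exploit convexity in order to sharpen the Lyapunov estimate. As in the non-convex case, I would set $a=\delta+\frac{2L}{\delta}-\alpha$ so that Lemma~\ref{lemma:Hconditions} applies, giving $R=2\bigl(\alpha-\frac{L}{\delta}\bigr)$ and ensuring conditions~\eqref{HR},~\eqref{H1},~\eqref{H2}. The target rate in~\eqref{ratesObjectiveTeo} is precisely $e^{-Rt}$, so the key improvement over Theorem~\ref{basicteorates} is that we want the objective error to decay at the full rate $R$ rather than at $m=\min\{a,R\}$; the restriction $\alpha\in(\frac{L}{\delta},\alpha_-]$ forces $\alpha<\frac13(\delta+\frac{4L}{\delta})$, hence $R<a$, so $m=R$ already, but the constant $C_{\delta,\alpha}=\frac{1}{\mu}\bigl(1+\frac{L}{\delta(\delta+\frac{L}{\delta}-\alpha)}\bigr)$ is different and better than the one coming from Lemma~\ref{lemmaexponentialforW}. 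This strongly suggests the proof does not merely invoke Lemma~\ref{lemmaexponentialforW}, but instead bounds $W(t)$ directly from the decay of $V(t)$ using convexity.

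The natural mechanism is as follows. From Lemma~\ref{lemma:expV} we have $V(t)\le aW(0)e^{-Rt}$, where $V(t)=aW(t)+\dot W(t)+\frac{\delta}{2}\norme{\dot x(t)}^2$. In the non-convex argument one discards $\dot W$ and $\frac{\delta}{2}\norme{\dot x}^2$ and solves a Grönwall inequality, paying the price of the extra factor $1+\frac{a}{|a-R|}$. With convexity I would instead try to control $W(t)$ itself by a lower bound on $V(t)$ rather than by integrating $\dot W$. The plan is to use convexity of $F$ together with the Baillon--Haddad cocoercivity inequality (Lemma~\ref{convexLipschitz} in the appendix, as already used in the proof of Proposition~\ref{prop: geom_interplay}) to bound the cross term $\psc{\nabla F(x)}{\dot x}$ from below, and then to show that a suitable fraction of $W(t)$ is dominated by $V(t)$. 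Concretely, since convexity gives $F(x)-F_*\le\psc{\nabla F(x)}{x-\bar x}$ and $L$-Lipschitzness gives $\frac{1}{2L}\norme{\nabla F(x)}^2\le\psc{\nabla F(x)}{x-\bar x}$, one can relate $\norme{\nabla F(x)}^2$ to $W(t)$ and thereby absorb the velocity and cross terms. The factor $\delta+\frac{L}{\delta}-\alpha$ appearing in $C_{\delta,\alpha}$ is, up to the sign conventions, exactly $a-\frac{L}{\delta}=\frac12(a+R)-\ldots$, pointing to the precise algebraic combination that must emerge.

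I expect the main obstacle to be identifying the exact convexity-based inequality that converts the bound $V(t)\le aW(0)e^{-Rt}$ into the clean estimate $W(t)\le C_{\delta,\alpha}W(0)e^{-Rt}$ with the stated constant, without any residual slowly-decaying term. The delicate point is handling the sign of the coefficient $\delta+\frac{L}{\delta}-\alpha$: the hypotheses split into $\mu=L$ with $\delta\le\sqrt L$ versus the remaining cases precisely because this determines whether $\sqrt{L}$ lies inside the admissible interval $(\frac{L}{\delta},\alpha_-]$ and whether the endpoint $\alpha_-$ may be attained (strict inequality $\alpha<\alpha_-$ when $\mu=L,\ \delta\le\sqrt L$, versus $\alpha\le\alpha_-$ otherwise). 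I would therefore carefully verify that $\delta+\frac{L}{\delta}-\alpha>0$ on the relevant range, guaranteeing $C_{\delta,\alpha}$ is finite and positive, and treat the boundary case $\alpha=\alpha_-$ (where $\bigl(\delta+\frac{L}{\delta}\bigr)^2=4\mu$ can degenerate) separately. Once the correct one-line convexity inequality is pinned down, integrating the differential inequality for $V$ and reading off the constant should be routine.
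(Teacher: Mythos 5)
Your plan follows essentially the same route as the paper's proof: the same choice $a=\delta+\frac{2L}{\delta}-\alpha$ giving $R=2\big(\alpha-\frac{L}{\delta}\big)$ via Lemma~\ref{lemma:Hconditions}, and the same key move of bypassing Lemma~\ref{lemmaexponentialforW} by bounding $W(t)$ directly through a lower bound on $V(t)$ that exploits convexity, with the extra admissibility constraint $a>\frac{L}{\delta}$ (equivalently $\alpha<\delta+\frac{L}{\delta}$) producing exactly the case split you anticipate, since $\alpha_-\leq\delta+\frac{L}{\delta}$ holds with equality precisely when $\mu=L$ and $\delta\leq\sqrt{L}$.

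Two corrections to the details, however. First, the ``one-line convexity inequality'' you leave open is $F(x)-F_{\ast}\geq\frac{1}{2L}\norme{\nabla F(x)}^{2}$, obtained from \eqref{convexLipschitz2} evaluated at a minimizer $\bar{x}$ (where $\nabla F(\bar{x})=0$); the two inequalities you propose, $W\leq\psc{\nabla F(x)}{x-\bar{x}}$ and $\frac{1}{2L}\norme{\nabla F(x)}^{2}\leq\psc{\nabla F(x)}{x-\bar{x}}$, both bound the same scalar product from below and do not combine to yield what is needed. Moreover, the cross term $\psc{\nabla F(x)}{\dot{x}}$ is handled not by Baillon--Haddad but by plain Cauchy--Schwarz and Young with parameter $\eta=\delta$, which exactly cancels the $\frac{\delta}{2}\norme{\dot{x}}^{2}$ term in $V$ and leaves $V(t)\geq\frac{1}{2}\big(\frac{a}{L}-\frac{1}{\delta}\big)\norme{\nabla F(x(t))}^{2}$; applying \eqref{KL} then converts this into \eqref{ratesObjectiveTeo}, and the prefactor $\frac{1}{\mu}$ in $C_{\delta,\alpha}$ records that final step, while $a-\frac{L}{\delta}=\delta+\frac{L}{\delta}-\alpha$ is the coefficient you correctly spotted. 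Second, your aside that $\alpha\in\big(\frac{L}{\delta},\alpha_-\big]$ forces $\alpha<\frac{1}{3}\big(\delta+\frac{4L}{\delta}\big)$, hence $R<a$, is false: as shown in the proof of Theorem~\ref{basicteorates}, when $\kappa\leq\frac{9}{8}$ and $\delta\in(\delta_-,\delta_+)$ one has $\frac{1}{3}\big(\delta+\frac{4L}{\delta}\big)<\alpha_-$, so $R>a$ can occur on the admissible range. This does not damage your plan, because the convex argument never passes through $m=\min\{a,R\}$ at all --- the relevant constraint is $a>\frac{L}{\delta}$, which you do identify --- but the stated justification for abandoning Lemma~\ref{lemmaexponentialforW} is wrong even though the conclusion is right.
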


\begin{proof} 
By using Cauchy–Schwarz inequality and then Young's inequality, for the scalar product $\psc{\nabla F(x(t))}{\dot{x}(t)}$,  for all $\eta>0$ and $t\geq 0$, we have:
\begin{equation}\label{Youngsproof}
   \psc{\nabla F(x(t))}{\dot{x}(t)}\geq -\frac{1}{2}\bigg(\frac{1}{\eta}\norme{\nabla F(x(t)}^2+\eta\norme{\dot{x}(t)}^2\bigg) 
\end{equation}

By injecting the previous inequality \eqref{Youngsproof} into the definition of the energy $V$ \eqref{Vdefinition}, we find: 
\begin{equation}\label{Vassiteleutaio}
a\big(F(x(t)-F_{\ast}\big)-\frac{1}{2}\bigg(\frac{1}{\eta}\norme{\nabla F(x(t)}^2+\eta\norme{\dot{x}(t)}^2\bigg) +\frac{\delta}{2}\norme{\dot{x}(t)}^2\leq V(t)    
\end{equation}
From convexity and the $L$-Lipschitz character of $\nabla F$ (see Proposition \ref{convexLipschitz} in Appendix) and \eqref{Vassiteleutaio}, it follows that for all $\eta>0$ and $t\geq0$, it holds:
\begin{equation}\label{lowerboundVt}
V(t)\geq \frac{1}{2}\bigg(\frac{a}{L}-\frac{1}{\eta}\bigg)\norme{\nabla F(x(t))}^{2}+\frac{1}{2}\bigg(\delta-\eta \bigg)\norme{\dot{x}(t)}^{2}
\end{equation}

Let us choose $\eta =\delta$, so that \eqref{lowerboundVt} becomes:

\begin{equation}\label{proofLipschitzconvex}
V(t)\geq \frac{1}{2}\bigg(\frac{a}{L}-\frac{1}{\delta}\bigg)\norme{\nabla F(x(t))}^{2}
\end{equation} 

Therefore, from \eqref{proofLipschitzconvex}, if $a>\frac{L}{\delta}$ and the conditions of Lemma \ref{lemma:expV} are satisfied, then it is immediate that for all $t\geq 0$, it holds:
\begin{equation}\label{proofratesnablaF}
\norme{\nabla F(x(t))}^{2}\leq \frac{2}{a-\frac{L}{\delta}}V(t)\leq \frac{2V(0)}{a-\frac{L}{\delta}}e^{-Rt}
\end{equation}
with $R=2\bigg(\alpha-\frac{L}{\delta}\bigg)$.

Next we show that the conditions of Lemma \ref{lemma:expV} are satisfied. In fact by Lemma \ref{lemma:Hconditions}, we recall that for all $\delta>0$, if we set $\alpha\in (\frac{L}{\delta},\alpha_{-}]\cup[\alpha_{+},\delta+\frac{2L}{\delta})$, with $\alpha_{\pm}=\frac{1}{2}\bigg(\delta+\frac{3L}{\delta}\pm\sqrt{\bigg(\delta+\frac{L}{\delta}\bigg)^2-4\mu}\bigg)$ and $a=\delta+\frac{2L}{\delta}-\alpha$, then Lemma \ref{lemma:expV} holds true with $R=2\left(\alpha-\frac{L}{\delta}\right)$. By imposing additionally that $a=\delta+\frac{2L}{\delta}-\alpha>\frac{L}{\delta}$, a straightforward computation shows that the damping parameter $\alpha$ should also satisfy $\alpha<\delta+\frac{L}{\delta}$.
Thus the overall conditions for $\alpha$, $\delta$ and $a$, so that \eqref{proofratesnablaF} holds true, are (here notice that $\delta+\frac{L}{\delta}\leq\alpha_{+}$):
\begin{equation}\label{casesall}
\begin{dcases}
\delta >0 \\
a=\delta+\frac{2L}{\delta}-\alpha \\
\alpha \in \bigg(\frac{L}{\delta},\min\big\{\delta+\frac{L}{\delta},\alpha_{-}\big\}\bigg) \quad , \quad \alpha_{-}=\frac{1}{2}\bigg(\delta+\frac{3L}{\delta}-\sqrt{\bigg(\delta+\frac{L}{\delta}\bigg)^2-4\mu}\bigg) 
\end{dcases}    
\end{equation}

Note that $\alpha_{-}\leq \delta+\frac{L}{\delta}$, holds always true and is saturated (holds as an equality), if and only if $L=\mu$ and $0<\delta\leq \sqrt{L}$.

In the case $\mu=L$, if $0<\delta\leq \sqrt{L}$, we have $\delta+\frac{L}{\delta}=\alpha_{-}$, thus from \eqref{casesall}, the rate is $R=2\bigg(\alpha-\frac{L}{\delta}\bigg)$, for all $\alpha\in \big(\frac{L}{\delta},\delta+\frac{L}{\delta}\big)$. 

If instead $\delta>\sqrt{L}$, then $\alpha_{-}<\delta+\frac{L}{\delta}$, thus from \eqref{casesall}, $R=2\bigg(\alpha-\frac{L}{\delta}\bigg)$ for all $\alpha\in \big(\frac{L}{\delta},\alpha_{-}\big)$.

In the case $\mu<L$, since $\alpha_{-}<\delta+\frac{L}{\delta}$ for all $\delta>0$, hence as before we have $R=2\bigg(\alpha-\frac{L}{\delta}\bigg)>0$, for all $\alpha\in \big(\frac{L}{\delta},\alpha_{-}\big)$.

Finally, for both cases from \eqref{proofratesnablaF}, with $a=\delta+\frac{2L}{\delta}-\alpha$ and $R=2\bigg(\alpha-\frac{L}{\delta}\bigg)$, we have:
\red{
\begin{equation}\label{gradientestimateconvex}
\norme{\nabla F(x(t))}^{2} \leq \frac{2\left(\delta+\frac{2L}{\delta}-\alpha\right)}{\delta+\frac{L}{\delta}-\alpha}\big(F(x(0))-F_{\ast}\big)e^{-2\left(\alpha-\frac{L}{\delta}\right)t}
\end{equation}
}
and  by using condition \eqref{KL} in \eqref{gradientestimateconvex}, we obtain:
\begin{equation}
F(x(t))-F_{\ast} \leq  \frac{\delta+\frac{2L}{\delta}-\alpha}{\mu(\delta+\frac{L}{\delta}-\alpha)}\big(F(x(0))-F_{\ast}\big)e^{-2\left(\alpha-\frac{L}{\delta}\right)t}
\end{equation}
which allows to conclude the proof of Theorem \ref{basicteoratesconvex}.
\end{proof}

\begin{remark}
In relation \eqref{ratesObjectiveTeo} of Theorem \ref{basicteoratesconvex}, the rate for $\norme{\nabla F(x(t))}^2$ and $F(x(t))-F_{\ast}$ is expressed as a function of both the damping term $\alpha$ and the auxiliary parameter $\delta$,  which also determines the choice of $\alpha$. The presence of $\delta$, is due to the Lyapunov analysis that we follow, where $\delta>0$ plays the role of a Lyapunov parameter (which is classically set free for tuning). Note that given any $\alpha$, there exists a $\delta$, such that the conditions of Theorem \ref{basicteoratesconvex} are always satisfied. 

Indeed, from \eqref{ratesObjectiveTeo} of Theorem \ref{basicteoratesconvex}, $\alpha$ can be chosen as follows:
\begin{equation}\label{alphaminus}
    \alpha=\alpha_{-}=\frac{1}{2}\bigg(\delta+\frac{3L}{\delta}-\sqrt{\bigg(\delta+\frac{L}{\delta}\bigg)^2-4\mu}\bigg)
\end{equation}
Since $\delta\to \alpha(\delta)$ is continuous and strictly decreasing, one could solve \eqref{alphaminus} for $\delta$, that is
\begin{equation}\label{thirdorderdelta}
    \begin{cases} & \alpha\delta^{3}-(\alpha^2+L+\mu)\delta^2+3L\alpha\delta -2L^2=0 \\ 
\text{s.t.: }    & \bigl(\alpha\geq \sqrt{3L})\text{ \& } \delta \in (0,\alpha-\sqrt{\alpha^2-3L})\cup(\alpha-\sqrt{\alpha^2-3L},+\infty)\bigr) \\ \text{ or } & \bigl((\alpha< \sqrt{L}) \text{ \& } \delta>0\bigr)
    \end{cases}
\end{equation}
Unfortunately the (positive) root of the third-order polynomial in \eqref{thirdorderdelta} cannot be given via an algebraic formulation, which is why the auxiliary variable $\delta>0$ is included in the statement of the Theorem.
\end{remark}

We are now ready to give the proof of Theorem \ref{coroptimal}.

\begin{proof}[\textbf{Proof of Theorem~\ref{coroptimal}}]
From Theorem \ref{basicteoratesconvex} and in particular relation \eqref{ratesObjectiveTeo}, we have $R=2\bigg(\alpha-\frac{L}{\delta}\bigg)$, hence it is of best interest to choose $\alpha$ as large as possible (with respect to $\delta$). 

Without loss of generality, let us consider the two complementary cases according to the Theorem \ref{basicteoratesconvex}:
\begin{itemize}

   \item Let $\mu=L$ and $\alpha\in \bigl(\frac{L}{\delta},\alpha_{-}\bigr)$, for all $\delta>0$. According to the first observation and taking the maximal value for $\alpha$, for any $\varepsilon \in (0,\alpha_{-}-\frac{L}{\delta})$, we can choose $\alpha=\alpha_{-}-\varepsilon=\frac{1}{2}\bigg(\delta+\frac{3L}{\delta}-\sqrt{\bigg(\delta-\frac{L}{\delta}\bigg)^2}\bigg)-\varepsilon=\frac{1}{2}\bigg(\delta+\frac{3L}{\delta}-\bigl|\delta-\frac{L}{\delta}\bigr|\bigg)-\varepsilon$. It follows that the convergence factor $R$ is equal to 
\begin{equation}\label{ratesRfinal}
R=R(\delta)=2\bigg(\alpha_{-}-\varepsilon-\frac{L}{\delta}\bigg) = \delta +\frac{L}{\delta} -\biggl|\delta-\frac{L}{\delta}\biggr| -2\varepsilon \qquad \forall \delta>0
\end{equation}

Therefore by studying the expression of $R$ in \eqref{ratesRfinal} as a function of $\delta>0$, one can deduce that the optimal value for $\delta$ that maximizes $R$, is $\delta^{\ast}=\sqrt{L}$ and gives $R^{\ast}=R(\delta^{\ast})=2\big(\sqrt{L}-\sqrt{L-\mu}\big)-\varepsilon=2\sqrt{\mu}-\varepsilon$ and $\alpha=2\sqrt{L}-\sqrt{L-\mu}-\varepsilon=2\sqrt{\mu}-\varepsilon$, for any $\varepsilon \in (0,\alpha_{-}-\frac{L}{\delta})$, which concludes the first point of Theorem \ref{coroptimal}.

\item 
If $\mu<L$ and  $\delta>0$, then from the conditions of Theorem \ref{basicteoratesconvex} the maximal value that we can choose for $\alpha$, is $\alpha=\alpha_{-}=\frac{1}{2}\bigg(\delta+\frac{3L}{\delta}-\sqrt{\bigg(\delta+\frac{L}{\delta}\bigg)^2-4\mu}\bigg)$.
It follows that the convergence factor $R$ is equal to :
\begin{equation}\label{ratesRfinal2}
R=R(\delta)=2\bigg(\alpha_{-}-\frac{L}{\delta}\bigg) = \delta +\frac{L}{\delta} -\sqrt{\bigg(\delta+\frac{L}{\delta}\bigg)^2-4\mu} \qquad \forall \delta>0
\end{equation}

Finally, in the same way as in the previous case, one can deduce that the optimal value for $\delta$ that maximizes $R$ in \eqref{ratesRfinal2}, is $\delta^{\ast}=\sqrt{L}$ and gives $R^{\ast}=R(\delta^{\ast})=2\big(\sqrt{L}-\sqrt{L-\mu}\big)$ and $\alpha=2\sqrt{L}-\sqrt{L-\mu}$ which concludes the second point of Theorem \ref{coroptimal}
\end{itemize}
\end{proof}

\section*{Acknowledgements}
V.A. acknowledges the financial support of the European Research Council (grant SLING 819789), the AFOSR projects FA9550-
18-1-7009, FA9550-17-1-0390 and BAA-AFRL-AFOSR-2016-0007 (European Office of Aerospace Research and Development), and the EU H2020-MSCA-RISE project NoMADS - DLV-777826. S. V. acknowledges the support of GNAMPA 2020: "Processi evolutivi con memoria descrivibili tramite equazioni integro-differenziali". This work has been supported by the ITN-ETN project TraDE-OPT funded by the European Union’s Horizon 2020 research and innovation programme under the Marie Skłodowska-Curie grant agreement No 861137.

\newpage

\appendix

\section{General lemmas}\label{appendixa}

In this appendix we give some general auxiliary lemmas used in the main core of this work.

\begin{lemma}[Gr\"onwall's Lemma]\label{gronwallcontinuous}
Let $I\subset \R_{+}=[0,+\infty)$ be an interval and $u$,$g$,$h$ $:I\longrightarrow \R$, with $h\in L^{1}_{loc}(I)$, $g\in \mathcal{C}(I)$, and $u\in\mathcal{C}^{1}(I)$, such that for all $t\in I$: \begin{equation}\label{gronwalhypothesis}\dot{u}(t)\leq g(t)u(t)+h(t)
\end{equation}
Then for all $s<t\in I$, it holds: \begin{equation}\label{gronwalconclusion}
u(t)\leq u(s)e^{G(t)}+\int_{s}^{t}e^{G(t)-G(r)}h(r)dr
\end{equation}
where $G(t)=\int_{s}^{t}g(r)dr$.
\end{lemma}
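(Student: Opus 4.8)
The plan is to use the classical integrating-factor argument for linear first-order differential inequalities. I would first fix $s\in I$ and observe that $G(t)=\int_s^t g(r)\,dr$ is well-defined and of class $\mathcal{C}^1$ on $I$, since $g$ is continuous; in particular $\dot{G}=g$ and $G(s)=0$. The key idea is to multiply through by the integrating factor $e^{-G}$, so as to turn the left-hand side of the hypothesis \eqref{gronwalhypothesis} into an exact derivative.

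Concretely, I would introduce the auxiliary function $v(t)=u(t)e^{-G(t)}$, which is $\mathcal{C}^1$ because both $u$ and $G$ are. Differentiating with the product rule gives $\dot{v}(t)=e^{-G(t)}\bigl(\dot{u}(t)-g(t)u(t)\bigr)$, and the hypothesis $\dot{u}(t)\leq g(t)u(t)+h(t)$ then yields the pointwise bound $\dot{v}(t)\leq e^{-G(t)}h(t)$ for every $t\in I$.

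The remaining step is to integrate this inequality from $s$ to $t$. Since $\dot{v}$ is continuous, the fundamental theorem of calculus applies and gives $v(t)-v(s)\leq\int_s^t e^{-G(r)}h(r)\,dr$, where the right-hand integral is finite because $e^{-G}$ is bounded on the compact interval $[s,t]$ and $h\in L^1_{loc}(I)$. Using $G(s)=0$, so that $v(s)=u(s)$, and then multiplying both sides by the positive quantity $e^{G(t)}$ (which preserves the direction of the inequality and can be brought inside the integral as the factor $e^{G(t)-G(r)}$), one recovers exactly the conclusion \eqref{gronwalconclusion}.

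The computation is entirely routine; the only points requiring a little care are the regularity bookkeeping — namely that $v\in\mathcal{C}^1$ so that the fundamental theorem of calculus is legitimate, and that the product $e^{-G}h$ is genuinely integrable on $[s,t]$ despite $h$ being only locally integrable — together with the observation that $e^{G(t)}>0$ is what permits the final multiplication without reversing the inequality.
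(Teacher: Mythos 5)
Your proof is correct and follows essentially the same integrating-factor argument as the paper: multiply the hypothesis by $e^{-G(t)}$ to obtain $\bigl(e^{-G(t)}u(t)\bigr)'\leq e^{-G(t)}h(t)$, integrate on $[s,t]$ using $G(s)=0$, and multiply back by $e^{G(t)}>0$. The additional regularity bookkeeping you supply (continuity of $\dot{v}$, hence applicability of the fundamental theorem of calculus, and integrability of $e^{-G}h$ on $[s,t]$) is sound and merely makes explicit what the paper leaves implicit.
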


\begin{proof}
By multiplying both sides of relation \eqref{gronwallcontinuous} by $e^{-G(t)}\geq 0$, we obtain :
\begin{equation*}
\big(e^{-G(t)}u(t)\big)^{\prime} \leq e^{-G(t)}h(t)
\end{equation*}
Hence by integrating on $[s,t]$, we find (notice that $G(s)=0$) :
\begin{equation*}
e^{-G(t)}u(t)\leq u(s) +\int_{s}^{t}e^{-G(r)}h(r)dr
\end{equation*}
which by multiplying on both sides by $e^{-G(t)}\geq 0$, gives \eqref{gronwalconclusion} and allows to conclude the proof of the Lemma.
\end{proof}

As a consequence of Lipschitz continuity of the gradient, one can obtain the following inequality (see for example \cite{BertsekasNonlinearProgramming} or \cite{nesterov2013introductory}): 

\begin{lemma}\label{convexLipschitz} Let $F:\Hb\longrightarrow\R$ a function with $L$-Lipschitz gradient. Then 

\begin{equation}\label{descentlemm}
F(x)-F(y)\leq \psc{\nabla F(y)}{x-y} +\frac{L}{2}\norme{x-y}^{2}
\end{equation}
If in addition $F$ is convex, then :
\begin{equation}\label{convexLipschitz2}
    \forall x,y\in \Hb \quad F(y)-F(x)\geq \psc{\nabla F(x)}{y-x}+\frac{1}{2L}\norme{\nabla F(y)-\nabla F(x)}^{2}
\end{equation}

\end{lemma}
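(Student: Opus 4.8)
The plan is to establish the two inequalities independently. For the first one \eqref{descentlemm}, I would use the fundamental theorem of calculus along the segment joining $y$ to $x$. Setting $\phi(s)=F(y+s(x-y))$ for $s\in[0,1]$, one has $\phi'(s)=\psc{\nabla F(y+s(x-y))}{x-y}$, so that $F(x)-F(y)=\int_0^1\phi'(s)\,ds$. Subtracting the linear term $\psc{\nabla F(y)}{x-y}$ and bounding the resulting integrand by Cauchy--Schwarz together with the $L$-Lipschitz continuity of $\nabla F$ gives $\psc{\nabla F(y+s(x-y))-\nabla F(y)}{x-y}\leq Ls\,\norme{x-y}^2$; integrating $Ls$ over $[0,1]$ produces the factor $\tfrac{L}{2}$ and yields \eqref{descentlemm}.

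For the second inequality \eqref{convexLipschitz2}, under the additional convexity hypothesis, I would introduce, for a fixed $x\in\Hb$, the auxiliary function $g(z)=F(z)-\psc{\nabla F(x)}{z}$. This $g$ is convex with $L$-Lipschitz gradient $\nabla g(z)=\nabla F(z)-\nabla F(x)$, and it satisfies $\nabla g(x)=0$, so that $x$ is a global minimizer of $g$. The key step is to apply the already-proved descent lemma \eqref{descentlemm} to $g$, at the base point $y$ and at the displaced point $w=y-\tfrac1L\nabla g(y)$: completing the square in the quadratic upper bound gives $g(w)\leq g(y)-\tfrac{1}{2L}\norme{\nabla g(y)}^2$. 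Since $x$ minimizes $g$, we have $g(x)\leq g(w)$, whence $g(x)\leq g(y)-\tfrac{1}{2L}\norme{\nabla g(y)}^2$. Unwinding the definition of $g$ and recalling that $\nabla g(y)=\nabla F(y)-\nabla F(x)$, a direct rearrangement produces exactly \eqref{convexLipschitz2}.

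The main (and essentially only) subtle point is the convex case: this is the co-coercivity half of the Baillon--Haddad theorem, and the trick is to recognise that minimising the quadratic majorant furnished by \eqref{descentlemm} at the optimal displacement $-\tfrac1L\nabla g(y)$ produces precisely the constant $\tfrac{1}{2L}$. The convexity assumption enters \emph{only} through the fact that the critical point $x$ of $g$ is a global minimizer; without it one recovers just the descent lemma. I expect the completion of the square identifying $w=y-\tfrac1L\nabla g(y)$ as the minimiser of $s\mapsto g(y)+\psc{\nabla g(y)}{s-y}+\tfrac{L}{2}\norme{s-y}^2$ to be the step most worth writing out explicitly, although it is a one-line computation.
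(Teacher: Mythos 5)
Your proof is correct: the paper itself gives no proof of this lemma, simply citing Bertsekas and Nesterov, and your argument (fundamental theorem of calculus for the descent lemma, then the auxiliary function $g(z)=F(z)-\psc{\nabla F(x)}{z}$ minimized at $x$ and the displacement $w=y-\tfrac{1}{L}\nabla g(y)$ for co-coercivity) is exactly the standard proof found in those references. Nothing is missing; both steps, including the observation that convexity enters only through $x$ being a global minimizer of $g$, are sound.
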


\phantomsection 
\addcontentsline{toc}{chapter}{Bibliography} 
\bibliographystyle{plain}
\bibliography{reference2}


\end{document}